\theoremstyle{plain}
\newtheorem{thm}{Theorem}[section]
\newtheorem{lemma}[thm]{Lemma}
\theoremstyle{definition}
\newtheorem{definition}{Definition}[section]
\newtheorem{example}{Example}
\newtheorem{remark}{\textnormal{\textbf{Remark}}}[section]
\newtheorem*{remark*}{\textnormal{\textbf{Remark}}}
\newtheorem{claim}[thm]{Claim}
\newcommand*{\diam}{\mathrm{diam}}
\newcommand*{\dist}{\mathrm{dist}}
\newcommand{\R}{{\mathbb R}}
\newcommand{\Om}{\Omega}
\definecolor{blau}{rgb}{0.1,0.0,0.9}
\newcommand{\red}{\color{red}}
\newcounter{komcounter}
\numberwithin{komcounter}{section}
\newcommand{\kom}[1]{}
\renewcommand{\kom}[1]{$\backslash$ {\bf  \red #1}$\backslash$}
\title[]{$\varepsilon$-approximability and Quantitative Fatou Theorem on Riemannian manifolds}
\author{Marcin Grysz\'owka{\small$^*$}}\thanks{{\small$^*$}\,The author was supported by the National Science Center, Poland (NCN), UMO-2020/39/O/ST1/00058}
\address{Institute of Mathematics, Polish Academy of Sciences,
ul. \'Sniadeckich 8, 00-656 Warsaw, Poland\/ \and Faculty of Mathematics, Informatics and Mechanics, University of Warsaw, ul. Banacha 2, 02-097 Warsaw, Poland}
\email{mgryszowka@impan.pl}
\begin{document}

\begin{abstract}
We prove the Quantitative Fatou Theorem for Lipschitz domains on complete Riemannian manifolds. This requires extending the $\epsilon$-approximation lemma to the manifold setting. Our studies apply to harmonic functions, as well as to a class of A-harmonic functions on manifolds.
The presented results extend works by Dahlberg, Garnett, Bortz and Hofmann.\\\\
 \emph{Keywords}: $\varepsilon$-approximability, Fatou theorems, Harmonic functions, Lipschitz domains
\newline
\newline
\emph{Mathematics Subject Classification (2020):} Primary: 31B25; Secondary: 58J60,28A75.
\end{abstract}

\maketitle

\section{Introduction}

Classical Fatou theorem states that bounded harmonic functions have non-tangential limits at a.e. point of the boundary. Garnett proved its quantitative version for upper half plane, see \cite[Chapter 8.6]{garnett}. Let us briefly explain what we mean by quantitative version. Consider a counting function $N_{\varepsilon}(x)$, where $x$ is a point on the boundary, which counts oscillations of size at least $\varepsilon$ of bounded harmonic function $u$ in a nontangential cone with vertex at $x$ and aperture $\alpha$. Moreover, let us take only lacunary sequences approaching to the boundary, with coefficient given by $\theta$. The value of a counting function is then the supremum over all possible lengths of such sequences. Garnett proved that for $\|u\|_{\infty}\le 1$ this function satisfies the estimate
\begin{equation*}
\int_{I}N_{\varepsilon}(x)dx\le C\varepsilon^{-7}
\end{equation*}
for every $\varepsilon>0$ with constant depending on $\varepsilon, \alpha, \theta$, but independent of $u$, where $I$ is any interval of length $1$. 
The crucial part of the proof was $\varepsilon$-approximability. The property was first established by Varopoulos \cite{var}, then refined by Garnett \cite{garnett}. Later Dahlberg proved that $\varepsilon$-approximability holds for harmonic functions on Lipschitz domains \cite{dahlberg}, then in \cite{kkpt} it was proved for A-harmonic functions, i.e. solutions to real divergence form equation ${\rm div}A\nabla u=0$.
Fairly recently, it was shown that for real divergence form operator satisfying the Carleson measure condition and pointwise local Lipschitz bound, $\varepsilon$-approximability is equivalent to uniform rectifiability, see \cite{hmm}, \cite{azzam}.

It was then proven by Bortz and Hofmann in \cite{bortz-hoffman} that Quantitative Fatou Property is equivalent to uniform rectifiability in the Euclidean setting (under appropriate assumptions). Therefore, there is a deep connection between properties of PDEs and geometry of sets on which they are defined.

So far we have not encountered any work regarding Quantitative Fatou Property in a setting different than Euclidean. Thus we would like to extend previous results to the setting of Riemannian manifolds. They are much broader than Euclidean space, however in a sense thay are a first step in generalizing any results from $\mathbb{R}^n$ to more general metric measure spaces, \cite{mt}. Since some notions are not even yet defined outside Euclidean space or fully established,  e.g. uniform rectifiability, see \cite{bate}, \cite[Chapters 6 and 9]{mattila}, we deal with the case of Lipschitz domains. We prove the following:
\begin{thm}\label{thm1}
Let $M$ be a complete Riemannian manifold and let further $\Omega\subset M^n$ be a Lipschitz domain. Furthermore, let $u:\Omega\rightarrow\mathbb{R}$ be a harmonic bounded function with $\|u\|_{\infty}\le 1$. Then for every point $p\in\partial\Omega$

\begin{align*}
\sup_{\substack{0<r<{\rm r_{inj}}}}\frac{1}{r^{n-1}}\int_{\partial\Omega\cap B(p,r)}N(r,\varepsilon,\theta)(q)d\sigma(q)\le C(\varepsilon,\alpha,\theta,n,\Omega),
\end{align*}

where $\varepsilon,\alpha,\theta$ are constants in the definition of the counting function. In particular, constant $C$ is independent of $u$. 
\end{thm}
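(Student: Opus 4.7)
The plan is to follow the strategy of Garnett and Dahlberg and reduce the quantitative Fatou estimate to an $\varepsilon$-approximation lemma for bounded harmonic functions on Lipschitz domains in $M$. That is, I would first prove that for every $\varepsilon \in (0,1)$ and every bounded harmonic $u$ on $\Omega$ with $\|u\|_{\infty} \le 1$ there is a function $\varphi \in C^{\infty}(\Omega)$ satisfying $\|u-\varphi\|_{L^{\infty}(\Omega)} \le \varepsilon$ and such that $|\nabla \varphi|\,dV$ is a Carleson measure on $\Omega$ with constant depending only on $\varepsilon$, $n$, and the Lipschitz character of $\partial\Omega$. Since the supremum in the conclusion is taken only over $0<r<{\rm r_{inj}}$, I may work inside a geodesic ball of radius comparable to ${\rm r_{inj}}$; there normal coordinates give a bi-Lipschitz identification with a Euclidean ball, and $\Omega$ becomes a Lipschitz subdomain of $\R^n$ carrying a Riemannian metric whose coefficients are bi-Lipschitz equivalent to the Euclidean ones. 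This reduction preserves the Lipschitz character of $\partial\Omega$, doubling of the induced surface measure, and the divergence-form structure of the Laplace--Beltrami equation, so that solutions of $\Delta_M u =0$ locally satisfy a uniformly elliptic real-coefficient equation of the type already studied in \cite{kkpt}.

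To construct $\varphi$ I would use Dahlberg's stopping-time scheme on a Whitney decomposition of $\Omega$. Let $\{Q_j\}$ be a family of geodesic Whitney ``cubes'' with $\diam Q_j \sim \dist(Q_j, \partial\Omega)$. Starting from a root cube and descending, declare $Q$ \emph{bad} if the oscillation of $u$ on a fixed dilate $Q^{*}$ exceeds $\varepsilon$, and otherwise \emph{good}; define $\varphi$ by assigning to each point the value $u(x_Q)$ at the center of its maximal good ancestor, smoothed across adjacent cubes. On good regions $\|u-\varphi\|_{\infty}\le \varepsilon$ by construction, and one iterates inside each bad cube. The gradient $|\nabla \varphi|$ is then essentially supported on the boundaries of the stopping-time cubes, so the Carleson control of $|\nabla \varphi|\,dV$ reduces to a packing estimate $\sum_{Q_j \subset Q} \sigma(\pi Q_j) \le C_{\varepsilon}\,\sigma(Q)$ for the projections of the stopping-time family to $\partial\Omega$. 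This packing estimate is the manifold analogue of Dahlberg's $L^2$ square-function bound for bounded harmonic functions on Lipschitz domains.

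With $\varepsilon$-approximability in hand, the counting-function bound follows by the standard geometric argument. If two consecutive points $z,z'$ of a lacunary sequence in the nontangential cone $\Gamma_\alpha(q)$ witness an oscillation of $u$ of size at least $\varepsilon$, then they witness an oscillation of $\varphi$ of size at least $\varepsilon/2$, and hence $\int_{z}^{z'} |\nabla \varphi|\,ds \ge \varepsilon/2$ along any joining curve. Summing over oscillations, the lacunarity parameter $\theta$ guarantees that the polygonal path through the sequence stays in a slightly wider cone $\Gamma_{\alpha'}(q)$, yielding $N(r,\varepsilon,\theta)(q) \le \frac{C}{\varepsilon}\int_{\Gamma_{\alpha'}(q)\cap B(p,Cr)} |\nabla \varphi|\,ds$. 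Integrating in $q$, applying Fubini and invoking the Carleson bound on the sawtooth region above $\partial\Omega \cap B(p,Cr)$ produces the $C_{\varepsilon,\alpha,\theta,n,\Omega}\,r^{n-1}$ estimate claimed in the theorem.

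The principal obstacle I anticipate is the manifold version of Dahlberg's square-function estimate used in the second step to establish the Carleson packing. Its Euclidean proof rests on three pillars: the boundary Harnack principle for harmonic functions on Lipschitz domains, the $A_{\infty}$ equivalence between harmonic and surface measure, and interior Caccioppoli-type bounds. Each has an analogue for the Laplace--Beltrami equation under the standing hypothesis that we remain below the injectivity radius, which supplies volume doubling and a scale-invariant Poincar\'e inequality on the relevant ball. The delicate point is to assemble them so that the resulting Carleson constant depends only on $\varepsilon$, $n$, the Lipschitz character of $\partial\Omega$, and the local geometry of $M$, and not on $u$; this is where the bulk of the work in extending \cite{dahlberg} and \cite{bortz-hoffman} to the Riemannian setting will lie.
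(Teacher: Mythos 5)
Your overall architecture agrees with the paper's: establish $\varepsilon$-approximability of $u$ and then convert the counting-function bound into a Carleson-type integral of $|\nabla\varphi|$ via curves in slightly widened cones, Fubini, and Ahlfors--David regularity. The problem is that the heart of the argument, the $\varepsilon$-approximation lemma on the manifold, is not actually proved in your proposal: you propose to rebuild Dahlberg's stopping-time construction directly on $\Omega\subset M$, and you yourself flag that the Carleson packing estimate --- which in the Euclidean case rests on boundary Harnack, the $A_\infty$ property of harmonic measure with respect to surface measure, and square-function estimates --- is ``where the bulk of the work will lie.'' None of these ingredients is proved or cited in the Riemannian setting, so the central lemma remains a program. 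The paper avoids this entirely: it writes $\Delta_M u=0$ in normal coordinates as $\mathrm{div}(A\nabla u)=0$ with $A=\sqrt{\det g}\,(g^{ij})$, verifies by smoothness and compactness that $A$ satisfies the Carleson measure condition (2.5) and the pointwise gradient bound (2.6), invokes the known Euclidean theorem for $A$-harmonic functions on Lipschitz (hence corkscrew and uniformly rectifiable) domains \cite{bortz-hoffman}, \cite{kkpt}, and then glues the local BV approximants. You gesture at exactly this reduction in your first paragraph (``of the type already studied in \cite{kkpt}'') but then abandon it for the harder direct route; carrying out the transfer-and-glue argument is what would close the gap.

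Two further points are glossed over. First, your localization ``$\Omega$ becomes a Lipschitz subdomain of $\R^n$'' is not automatic: the intersection of $\Omega$ with a geodesic ball need not be Lipschitz, and the paper spends a substantial part of the proof of Theorem \ref{thm-e-approx} modifying the covering precisely so that every chart piece intersected with $\Omega$ is Lipschitz with uniformly controlled constants (this is needed to apply the Euclidean theorem, whose hypotheses include corkscrew and uniform rectifiability of the localized domain). Second, the approximant in the definition used here is only BV, so the inequality $\int_z^{z'}|\nabla\varphi|\,ds\ge\varepsilon/2$ along a curve is not meaningful as stated; the paper instead takes an $\varepsilon/4$-approximant, thickens the connecting curves into solid truncated cones mapped by volume-preserving rotations, and uses the coarea formula to produce the lower bound $\int_{\Gamma_r(q)}|\nabla\varphi(x)|\,d(x,q)^{1-n}dx\gtrsim k$, which is the form that pairs with the Carleson condition after Fubini. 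If you insist on a smooth $\varphi$ you must justify that mollification at Whitney scales preserves both the $L^\infty$ closeness and the Carleson bound on the manifold; otherwise you need the BV/coarea version of the argument.
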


One of the key auxiliary results to prove Theorem \ref{thm1} is the following $\varepsilon$-approximability property:

\begin{thm}\label{thm-e-approx}
Let $M$ be an $n$-dimensional complete Riemannian manifold and $\Omega\subset M$ be an open bounded connected Lipschitz set. Let $u$ be a harmonic bounded function in $\Omega$. Then $u$ is $\varepsilon$-approximable for every $\varepsilon >0$. 
\end{thm}

Main difficulty is the fact that we do not have one map available on whole of $\Omega$. Therefore we deal with pieces of $\Omega$ where there are maps. On these pieces we take local $\varepsilon$-approximants. We need to show that we can choose all maps in a uniform way and that we can later glue everything together to obtain $\varepsilon$-approximability on $\Omega$. In Euclidean case there is usually a certain way to integrate counting function to obtain desired estimate. We deal with it by taking an appropriately constructed curve and its neighbourhood contained in a non-tangential cone. Using coarea formula will then enable us to arrive at our estimate.  In our proof we follow the idea of \cite{kkpt}, but adjust it to the setting of Riemannian manifolds.

\section{Acknowledgements}
I would like to thank my advisor, Tomasz Adamowicz, for all the support he offered my while preparing this article. I would like to thank Katrin F{\"a}ssler for helping me with a part of the proof of Theorem \ref{glowne twierdzenie}.
Moreover, I thank Tuomas Orponen and Xavier Tolsa for conversations during Summer School in Jyv{\"a}skyl{\"a}.

\section{Preliminiaries}\label{prelim}

Throughout this article we work in the setting of Riemannian manifold.

\begin{definition}
Let $M^n$ be a smooth $n$-dimensional manifold. We will say that $M^n$ is a Riemannian manifold if it is equipped with a scalar product $g_x:T_x M^n\times T_x M^n\to \mathbb{R}$ depending smoothly on the point $x\in M$. Usually $g$ is represented by a matrix which is also denoted by $g$. Its determinant is denoted by $\det g$ and its coefficients are written with lower indices as $g_{ij}$ for $i,j=1,\dots,n$. The inverse matrix, which exists at every point since $g$ is invertible as scalar product, is denoted by $g^{-1}$ and its coefficients are written with upper indices $g^{ij}$ for $i,j=1,\dots,n$.
In what follows we will write $M:=M^n$ to denote the Riemannian manifold when the dimension $n$ is fixed.
\end{definition}

When working with manifold one often uses local coordinates. In our case a certain choice of coordinates is convienient.

\begin{definition}
Let $M$ be a Riemannian manifold. For any point $p\in M$ and any neighbourhood of $p$ we introduce \emph{normal coordinates} in a following way. Let $\exp_{p}:T_{p}M\rightarrow M$ be a map such that $\exp_{p}(v)=\gamma_{p}(1)$,
where $\gamma_{p}$ is a unique geodesic satisfying $\gamma_{p}(0)=p$ and $\dot{\gamma}_{p}(0)=v$.
It is known that one can find such a neighbourhood of point $p$ that $\exp_{p}$ is a diffeomorphism, call it $U_p$. Since the tangent space $T_p M$ can be identified with space $\mathbb{R}^n$, say by isomorphism $T:T_p M\rightarrow\mathbb{R}^n$, the map $T\circ\exp_p^{-1}:M\supset U_p\rightarrow\mathbb{R}^n$ defines a local coordinate chart which we will call \emph{normal coordinates}. In the sequel with an abuse of notation we will omit the isomorphism between a tangent space and Euclidean space and use only $\exp_p$ or $\exp_p^{-1}$.
\end{definition}

We will mostly deal with Lipschitz domains. Therefore, it is necessary to recall what we mean by a Lipschitz set on manifold $M$.

\begin{definition}
Let $\Omega$ be an open connected subset of a manifold $M$ and let $p\in\partial\Omega$. We say that $\Omega$ is \emph{locally Lipschitz} at $p$ if there exist a neighbourhood $U$ of $p$ in $M$ and a local chart $f:U\rightarrow f(U)\subset\mathbb{R}^n$ such that $f(U\cap\Omega)$ is a Lipschitz set in $\mathbb{R}^n$. We say that $\Omega$ is a Lipschitz set if it is locally Lipschitz at every point of its boundary $\partial\Omega$.
\end{definition}

\begin{definition}
Let $\Omega\subset M$ be an open set. We say that it satisfies inetrior corkscrew condition if there exists a constant $c$ such that for every set $B(p,r)\cap\partial\Omega$ with $p\in\partial\Omega$ and $0<r<\diam (\partial\Omega)$ there is a ball $B(\widetilde{x},cr)\subset B(p,r)\cap\Omega$. We say that $\Omega$ satisfies exterior corkscrew condition if $M\setminus\Omega$ satisfies interior corkscrew condition.
\end{definition}

\begin{definition}
We say that set $E\subset M$ is \emph{Ahlfors-David regular}, of Hausdorff dimension $n-1$, if it is closed and there exists a constant $C<0$ such that
\begin{align*}
\frac{1}{C}r^{n-1}\le\sigma(B(p,r)\cap E)\le Cr^{n-1}, \hspace{5mm} \textnormal{ for all } p\in E \textnormal{ and } 0<r<\diam (E),
\end{align*}
where $\sigma$ denotes a surface measure on $E$.
\end{definition}

The following generalizes Definition 2.4 in \cite{bortz-hoffman} to the setting of Riemannian manifolds. We retrieve that definition for $M=\mathbb{R}^n$.

\begin{definition}\label{A-harmonic-def}
We will say that a Sobolev function $u:\Omega\rightarrow\mathbb{R}$ is \emph{$A$-harmonic} if it satisfies the following equation
\begin{equation}\label{A-harmonic}
\textnormal{div}(A\nabla u)=0,
\end{equation}
understood in the weak sense, where $A:TM\rightarrow TM$ is such that for each point $x\in\Omega$ we have $A(x):T_x M\rightarrow T_x M$ is a linear map and for every fixed vector field $X$ the map $A(\cdot,X)$ is measurable. Furthermore, we assume that there is a constant $C>1$ such that
\begin{align*}
C^{-1}|\xi|^2\le g_x(A(x)\xi,\xi), \hspace{5mm} \|A\|_{\infty}\le C,
\end{align*}
for all $x\in\Omega$ and $\xi\in T_x M$.
Moreover, we impose that operator $A$ has bounded coefficients and satisfies the following conditions (2.5) and (2.6) in \cite{bortz-hoffman}. 
\begin{itemize}
\item[(2.5)] the Carleson measure condition
$$
    \sup_{x\in\partial \Omega,0<r<\diam(\partial\Omega)}\frac{1}{H^{n-1}(B(x,r)\cap\partial\Omega)}\int_{B(x,r)\cap\Omega}|\nabla A(X)|dX\le C<\infty,
    $$
\item[(2.6)] the pointwise gradient estimate
 $$
    |\nabla A(X)|\le \frac{C}{\dist(X,\partial\Omega)}, \hspace{5mm} \textnormal{for all } X\in\Omega.
   $$
\end{itemize}
\end{definition}

\begin{remark}
If in the above definition operator $A$ is such that for every $x\in\Omega$ we have $A(x):T_x M\rightarrow T_x M$ is an identity transformation, then we obtain a Laplace-Beltrami equation. For more information about harmonic functions on manifolds, see \cite{li}.
\end{remark}

\begin{definition}
Let $\Omega\subset M$ be an open set and $u\in L^1(\Omega)$. We say that $u$ has \emph{bounded variation} in $\Omega$ (denoted $u\in BV(\Omega)$) if 
$$
\sup\{\int_{\Omega}u \textnormal{div}(\phi X): X\in\Gamma(\Omega), \phi\in C_c^{\infty}(\Omega), |\phi|\le 1\}<\infty,
$$ 
where

$\Gamma(\Omega)=\{X: X \textnormal{ is a smooth vector field on } \Omega \textnormal{ and } g(X(x),X(x))\le 1 \textnormal{ for every } x\in\Omega\}$
and $g$ denotes the metric on $M$. 
The above supremum is called a \emph{variation} of $u$.
\end{definition}
\begin{remark}
If $\Omega\subset M=\mathbb{R}^{n}$, then the above definition gives the usual definition of functions of bounded variation in $\Omega$.
\end{remark}

The fundamental notion that will enable us to prove the Quantitative Fatou Property is the $\varepsilon$-approximability. It has been used by numerous authors. It seems that the first to describe $\varepsilon$-approximability on upper half-plane was Varopoulos \cite{var}, the property was later improved by Garnett \cite{garnett}. Later the property was proved for Lipschitz domains by Dahlberg, see \cite{dahlberg}. In \cite{kkpt} the result is proven for A-harmonic functions in Lipschitz domains. However, to our best knowledge so far, it has only been used in Euclidean setting. Therefore, we give the definition in the setting of Riemannian manifolds.

\begin{definition}\label{approx-def}
Let $\Omega\subset M$ be a Lipschitz domain on a Riemannian manifold $M$. Let $u:\Omega\rightarrow\mathbb{R}$ be a harmonic function with $\|u\|_{\infty}\le 1$. We will say that function $u$ is \emph{$\varepsilon$-approximable} for some $\varepsilon>0$ if there exists a function $\phi\in BV(\Omega)$ such that
\begin{enumerate}
\item $|u(x)-\phi(x)|<\varepsilon$ \hspace{5mm} for every  $x\in\Omega$,
\item $\int_{B(p,r)\cap\Omega}|\nabla\phi|\le Cr^{n-1}$ \hspace{5mm} for every $p\in\partial\Omega$ and $0<r<\diam (\Omega)$.
\end{enumerate} 
\end{definition}

Now we will define a generalized cone, a notion that in the Euclidean setting corresponds to the notion of the cone.

\begin{definition}\label{cone-definition}
Let $\Omega$ be a connected, open subset of a Riemannian manifold $M$ and let $p\in\partial\Omega$. Let $0<\alpha<\infty$. The set
$$
\Gamma(p):=\Gamma_{\alpha}(p)=\{q\in\Omega: d(p,q)\le(1+\alpha)d(q,\partial\Omega)\},
$$
is called a \emph{generalized cone}. Moreover, we also define a \emph{truncated generalized cone} as follows
$$
\Gamma^{r}(p):=\Gamma(p)\cap B(p,r).
$$
\end{definition}
For the sake of simplicity we will use a name "cone" instead of generalized cone. In literature a name non-tangential approach region is also used, see \cite{kkpt}.

\begin{definition}
Let $\Omega\subset M$ be open, bounded, connected and $p\in\partial\Omega$. Let further $\Gamma(p)$ denote a cone at $p$. We define a \emph{doubly truncated cone}
$\Gamma_{r_1,r_{2}}(p)$ as follows
$$
\Gamma_{r_1,r_{2}}(p):=(\Gamma(p)\cap B(p,r_{1}))\setminus (\Gamma(p)\cap B(p,r_2)),
$$
where $0<r_1,r_2<\infty$.
\end{definition}

The following is one of the key definitions of our work.

\begin{definition}[Counting function]\label{counting-function-definition}
Let $\Gamma^r(p)$ be a cone for some point $p$ in the boundary of $\Omega$. Let $u$ be a harmonic function defined on $\Omega$. Denote by $d$ a Riemannian distance in manifold $M$. Fix $\varepsilon>0$, $0<\theta<1$ and $0<r<1$. We will say that a sequence of points $Q_n\in\Gamma^r(p)$ is $(r,\varepsilon,\theta,p)$-admissible for u if 
\begin{align*}
|u(Q_n)-u(Q_{n-1})|\ge\varepsilon,\\
d(Q_n,p)<\theta d(Q_{n-1},p).
\end{align*}
Set 
$$N(r, \varepsilon, \theta)(p)=\sup\{k: \textnormal{there exists an } (r, \varepsilon, \theta, p)\textnormal{-admissible sequence of length }k\}.$$
We will call $N$ a \emph{counting function}.
\end{definition}
The similar notions are known in $\mathbb{R}^n$, see \cite{kkpt}, \cite{bortz-hoffman}. They are a way to estimate how much a harmonic function oscillates while approaching the boundary. In mentioned works the Quantitative Fatou Property was proved, i.e. this function is integrable and its integral over a surface ball is bounded by a measure of that ball. This gives a generalization of a classical Fatou theorem which states that non-tangential limit exists at a.e. point of boundary. In the language of counting function it reads that counting function $N$ is finite a.e. Therefore, the name Quantitative Fatou Property indeed suits, as it gives a quantitative bound of a certain integral. 

\section{Harmonic and A-harmonic $\varepsilon$-approximability}
The crucial part of the proof of Quantitative Fatou Theorem is $\varepsilon$-approximability. In Definition \ref{approx-def} we see that function $\phi$ is close to harmonic function $u$, but has a property that its gradient gives Carleson measure, which may not necessarily by true for any harmonic function. This is the essential part of estimates needed to obtain Quantitative Fatou Theorem.

\begin{thm}\label{thm-e-approx}
Let $M$ be an $n$-dimensional complete Riemannian manifold and $\Omega\subset M$ be an open bounded connected Lipschitz set. Let $u$ be a harmonic bounded function in $\Omega$. Then $u$ is $\varepsilon$-approximable for every $\varepsilon >0$. 
\end{thm}

In order to prove Theorem \ref{thm-e-approx} we use \cite[Theorem 2.15]{bortz-hoffman}. That theorem states that when $M$ is a Euclidean space, the assertion of Theorem \ref{thm-e-approx} holds. We cover set $\Omega$ with finitely many open sets such that on each we are able to use normal coordinates. The properties of these coordinates and the fact that $\Omega$ is Lipschitz enable us to prove existence of $\varepsilon$-approximation on each of the sets from the covering. Finally we glue these approximations and show that it is an $\varepsilon$-approximation on $\Omega$.
We need normal coordinates on a manifold in order to express Laplace equation on a manifold in these coordinates as an A-harmonic equation. We prove that a Lipschitz set satisfies interior corkscrew condition, which is an essential property of a set $\Omega$. Due to Lipschitzness the boundary $\partial\Omega$ satisfies Ahlfors-David regularity condition, which is necessary for our theorem to hold.

Before proving Theorem \ref{thm-e-approx} let us discuss two auxiliary results.First we establish a relation between BV functions on manifolds and $\mathbb{R}^n$.

\begin{lemma}\label{BV-mfld}
Let $M$ be a Riemannian manifold. Furthermore, let $\phi: U\to \R$ be a BV function defined on a set $U\subset V\subset\mathbb{R}^n$, where $V$ is such a set that $\exp_p$ is a diffeomorphism on $V$ into $M$ for some point $p\in V$. Then, the composition $\phi\circ\exp_p^{-1}$ is a BV function on $\exp_p(U)\subset M$.
\end{lemma}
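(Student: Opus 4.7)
The plan is to perform the change of variables given by $\exp_p$ in the definition of the manifold BV norm of $\phi\circ\exp_p^{-1}$ and compare the result against the Euclidean BV norm of $\phi$ on $U$. In the normal coordinates provided by $\exp_p$, the volume element reads $d\mathrm{vol}_g=\sqrt{\det g}\,dx$ and the Riemannian divergence of a smooth vector field with components $Y^i$ is $\mathrm{div}_g(Y)=(\det g)^{-1/2}\partial_i(\sqrt{\det g}\,Y^i)$.

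Fix an admissible pair on the manifold side: a smooth vector field $X$ on $\exp_p(U)$ with $g(X,X)\le 1$ and a test function $\varphi\in C_c^\infty(\exp_p(U))$ with $|\varphi|\le 1$. Set $\tilde\varphi:=\varphi\circ\exp_p$ and let $X^i$ denote the coordinate components of the pullback of $X$. The two formulas above produce a cancellation of the $\sqrt{\det g}$ factors after the change of variables:
\begin{align*}
\int_{\exp_p(U)}(\phi\circ\exp_p^{-1})\,\mathrm{div}_g(\varphi X)\,d\mathrm{vol}_g=\int_U\phi\,\partial_i\bigl(\sqrt{\det g}\,\tilde\varphi\,X^i\bigr)\,dx=\int_U\phi\,\mathrm{div}_{\mathrm{Eucl}}(Z)\,dx,
\end{align*}
where $Z^i:=\sqrt{\det g}\,\tilde\varphi\,X^i$ defines a smooth, compactly supported Euclidean vector field on $U$. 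Since $\exp_p$ is a diffeomorphism on $V\supset U$, the entries of $(g_{ij})$ are smooth, so on the compact set $\mathrm{supp}(\tilde\varphi)$ the matrix $(g_{ij})$ is uniformly positive definite and $\det g$ is bounded; combined with $|\varphi|\le 1$ and the ellipticity bound extracted from $g_{ij}X^iX^j\le 1$, this yields $|Z|_{\mathrm{Eucl}}\le C$ for a constant $C$ independent of the particular admissible pair. The standard BV duality then gives
\begin{align*}
\int_U\phi\,\mathrm{div}_{\mathrm{Eucl}}(Z)\,dx\le C\,\|\phi\|_{BV(U)},
\end{align*}
and taking the supremum over admissible pairs $(\varphi,X)$ yields $\|\phi\circ\exp_p^{-1}\|_{BV(\exp_p(U))}\le C\,\|\phi\|_{BV(U)}<\infty$, which is the desired conclusion. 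A minor preparatory observation is that the paper's definition of BV, based on products $\varphi X$, agrees with the usual Euclidean definition in terms of compactly supported smooth fields $Z$ of norm at most $1$, since any such $Z$ can be written as $\varphi Z$ for a bump function $\varphi\equiv 1$ on $\mathrm{supp}(Z)$.

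The delicate point, and the main place where I would need to be careful, is ensuring that the constant $C$ is genuinely uniform over all admissible test pairs: it depends on $L^\infty$ bounds of $g_{ij}$, $\det g$ and on the ellipticity constant of $(g_{ij})$ on the supports of the admissible $\varphi$, so one implicitly uses that $\overline{U}$ is a compact subset of $V$ (or, failing that, exhausts $U$ by such compacts to obtain at least $BV_{\mathrm{loc}}$). In the intended application to Theorem~\ref{thm-e-approx} this compactness will be available by choice of the normal charts covering $\Omega$.
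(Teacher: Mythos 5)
Your proposal is correct and follows essentially the same route as the paper: pull the manifold-side test pair back through $\exp_p$, identify the resulting object as a Euclidean test field on $U$, and bound the integral by the Euclidean variation of $\phi$. Your version is in fact the more careful one --- you track the exact cancellation of $\sqrt{\det g}$ between the Riemannian divergence and the volume element, whereas the paper absorbs the Jacobian into an implicit constant, and both arguments tacitly use that the metric coefficients are uniformly controlled on $U$ (available in the intended application, where $\overline{U}$ is compactly contained in the chart).
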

\begin{proof}
By the definition, a function $\phi\in BV (U)$ if 
\[
\sup\int_U\phi\,{\rm div}v<\infty,
\]
where the supremum is taken over the set of all $C^{\infty}_{c}(U,\mathbb{R}^n)$ with $|v|\le 1$. We would like to find uniform estimate for the integral $\int_{\exp_p(U)}(\phi\circ\exp_p^{-1}){\rm div}v$, where $v\in C^{\infty}_c(\exp_p(U),\mathbb{R}^n)$. By the change of variables  formula we obtain 
\begin{align*}
\int_{\exp_p(U)}(\phi\circ\exp_p^{-1}){\rm div}v &=\int_U((\phi\circ\exp_p^{-1})\circ\exp_p) {\rm div} (v\circ \exp_p)|J\exp_p|\\
&\lesssim\int_U\phi{\rm div}(v\circ\exp_p).
\end{align*}
The approximate inequality is the consequence of the fact that Jacobian of $\exp_p$ is bounded, since set $U\subset V$.
In the last integral, instead of $v$, we now have $v\circ\exp_p$, which may a priori change the set of functions over which supremum is taken. However, it turns out that any function $w\in C^{\infty}_c (U,\mathbb{R}^n)$, $|w|\le 1$ can be written as $v\circ\exp_p$ for some function $v\in C^{\infty}_c (\exp_p(U),\mathbb{R}^n)$, $|v|\le 1$. Indeed, this follows from writing $w=(w\circ\exp_p^{-1})\circ\exp_p$ and setting $v=w\circ\exp_p^{-1}$. This completes the proof of the lemma.
\end{proof}

Next lemma is a mathematical folklore, however since we did not find its explicit proof in the literature we decided to provide the statement of the following result, whose proof can be found in Appendix A.

\begin{lemma}\label{lem-Lip-cork}
A bounded Lipschitz set in the Euclidean space satisfies interior corkscrew condition.
\end{lemma}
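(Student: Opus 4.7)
The plan is to combine the local graph description of a Lipschitz boundary with a compactness argument, then treat small and large scales separately.

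First, I would fix a boundary point $p\in\partial\Omega$ and invoke the local Lipschitz chart at $p$: after an isometry of $\mathbb{R}^n$, there is a radius $\rho_p>0$ and an $L_p$-Lipschitz function $f_p$ such that $\Omega\cap B(p,\rho_p)=\{(x',x_n):x_n>f_p(x')\}\cap B(p,\rho_p)$. For any $q\in\partial\Omega\cap B(p,\rho_p/2)$ (written in these coordinates as $q=(q',f_p(q'))$) and any $0<r\le \rho_p/2$, the candidate interior corkscrew center $\tilde{x}:=q+\tfrac{r}{2}e_n$ satisfies $|\tilde x -p|\le r$, and for any $y\in B(\tilde x,\rho)$ we have $y_n-f_p(y')\ge \tfrac{r}{2}-\rho-L_p\rho$. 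Choosing $\rho=c_p r$ with $c_p:=\tfrac{1}{2(1+L_p)}$ yields $B(\tilde x,c_p r)\subset\Omega\cap B(p,r)$.

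Next I would exploit compactness of $\partial\Omega$ (which is compact because $\Omega$ is bounded). The neighborhoods $B(p,\rho_p/2)$ cover $\partial\Omega$; extract a finite subcover $\{B(p_i,\rho_{p_i}/2)\}_{i=1}^{N}$ and let $L:=\max_i L_{p_i}$ and $r_0:=\tfrac{1}{4}\min_i\rho_{p_i}$, which plays the role of a Lebesgue number. Then every boundary point $q$ lies in some $B(p_i,\rho_{p_i}/2)$ together with all scales $r\le r_0$, so the local construction above applies uniformly and yields the interior corkscrew property with constant $c_0:=\tfrac{1}{2(1+L)}$ for every $q\in\partial\Omega$ and every $0<r\le r_0$.

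It remains to handle the large-scale regime $r_0<r<\diam(\partial\Omega)$. For such $r$ and any $q\in\partial\Omega$, the small-scale case at radius $r_0$ provides a ball
\[
B(\tilde x, c_0 r_0)\subset B(q,r_0)\cap\Omega\subset B(q,r)\cap\Omega.
\]
Setting $c:=\min\!\left(c_0,\ \dfrac{c_0 r_0}{\diam(\Omega)}\right)$ then gives $B(\tilde x, c r)\subset B(q,r)\cap\Omega$ uniformly in $q$ and $r$, proving the lemma.

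The main technical point to get right is the uniformity step: one must check that the rotation used to straighten the boundary and the domain of definition of $f_p$ are stable for nearby boundary points, so that the center $\tilde x$ and the graph estimate $|f_p(y')-f_p(q')|\le L_p|y'-q'|$ remain valid not only at $p$ itself but for every $q$ within distance $\rho_p/2$ of $p$; once this is arranged, the rest is bookkeeping between the two scales $r\le r_0$ and $r_0<r<\diam(\partial\Omega)$.
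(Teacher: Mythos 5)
Your argument is essentially the same as the paper's: both proofs exploit the local Lipschitz graph description, push a boundary point a distance $\sim r/2$ inward along the ``normal'' direction of the chart to get the corkscrew center, and use compactness of $\partial\Omega$ to make the chart radii and Lipschitz constants uniform. The paper phrases the local step in terms of a cone inscribed in $\Omega$ at each boundary point and folds the two scales into a single formula by taking $\tilde z = z-\tfrac12\min(\tilde H,r)\mathbf n$; you instead argue directly from the graph estimate and split explicitly into $r\le r_0$ and $r>r_0$, obtaining the large-scale case by reusing the small-scale ball and shrinking the constant by $r_0/\diam(\Omega)$. These are cosmetically different organizations of the same idea, and your two-scale split is, if anything, a slightly cleaner way to present the bookkeeping.

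Two small slips worth fixing. First, $|\tilde x - p|\le r$ should read $|\tilde x - q|\le r$ (indeed $=r/2$), since $q$ is the point at which the corkscrew condition is being verified. Second, with $c_p=\tfrac{1}{2(1+L_p)}$ the estimate $y_n-f_p(y')\ge \tfrac r2-(1+L_p)\rho$ only gives $\ge 0$ for $\rho=c_p r$, so the ball touches $\partial\Omega$ rather than sitting strictly inside; take e.g.\ $c_p=\tfrac{1}{4(1+L_p)}$ (which also comfortably ensures $B(\tilde x,c_p r)\subset B(p,\rho_p)$, needed so that the graph description of $\Omega$ actually applies on the whole ball). Finally, the ``technical point'' you flag at the end about re-rotating near $q$ is not really an issue: you use the fixed chart and fixed $f_{p_i}$ at $p_i$ for every $q\in B(p_i,\rho_{p_i}/2)$, so no stability of rotations is required — which is exactly how the paper handles it as well.
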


We are now in a position to prove the main result of this section.

\begin{proof}[Proof of Theorem~\ref{thm-e-approx}] 

We begin with covering domain $\Omega$ with appropriately constructed Lipschitz sets, such that each of these sets is diffeomorphic to a subset of $\mathbb{R}^n$ and all the partial derivatives of such diffeomorphisms are uniformly bounded in $\Omega$. Let us describe how it can be achieved.

Let us cover $\bar{\Omega}$ with sets $V_{p}$ for $p\in\Omega$, such that each $V_{p}$ is a neighbourhood of $p$ with the property $\exp_{p}$ is a diffeomorphism on $\exp_{p}^{-1}(V_{p})$. 
Since $\bar{\Omega}$ is closed and bounded, it holds that $\bar{\Omega}$ is compact by completeness of $M$. Therefore, there exists a finite cover of $\bar{\Omega}$ by sets denoted by $V_{l}$ with $l\in\{1,2,\dots,m\}$, where $m$ is a number of sets in the cover. Moreover, by $p_l$ we denote points $p$ corresponding to sets $V_l$. 
\smallskip
\\
{\bf Claim:} Sets $V_{l}$ can be choosen to be Lipschitz.
\smallskip
\\
 In order to prove the claim, take any ball $B(p_{l},R_l)\subset M$, where $R_l={\rm r_{inj}}(p_l)$ is injectivity radius at $p_l\in \Omega$, for $l=1,\ldots, m$. Due to the fact that $\bar{\Omega}$ is compact,  it holds that
 \[
 \inf_{p\in\Omega}{\rm r_{inj}}(p)=\min_{p\in \Omega}{\rm r_{inj}(p)}:={\rm r_{inj}}>0.
 \]
Next, consider balls $B(p_{l},R_l-\delta)$ with $\delta>0$ small enough, say $\delta=\frac{1}{10}r_{inj}$. If it turns out that such smaller balls do not cover $\Omega$, then increase $m$ so that the new larger family of balls covers $\Omega$. Since $B(p_{l},R_l-\delta)\subset\subset B(p_{l},R_l)$ and the map $\exp^{-1}_{p_{l}}$ is a diffeomorphism on $B(p_{l},R_l)$, we have that the derivative of $\exp_{p_{l}}$ is bounded on each of $B(p_{l},R_l-\delta)$. Moreover, balls $B(p_l,R_l-\delta)$ are Lipschitz because they are images of balls $B(0,R_l-\delta)$ in Euclidean space under $\exp_{p_l}$. This holds since the latter balls are Lipschitz sets in $\mathbb{R}^n$ and $\exp_p$ is a Lipschitz map.

Since in our setting function $u$ is defined only on $\Omega$, we need to intersect balls $B(p_l,R_l-\delta)$ with $\Omega$. Unfortunately, sets $V_l^{\prime}:=B(p_l,R_l-\delta)\cap\Omega$ need not be Lipschitz or satisfy interior corkscrew condition. Thus, it is necessary to augment our covering. 

Notice, that as $\Omega$ is Lipschitz, sets $V_l^{\prime}$ have finitely many connected components. Since $l\le m$, the set of all connected components of these sets is finite as well. Hence we may further assume that sets $V_l^{\prime}$ are connected.

Each set $V_l^{\prime}$ is locally Lipschitz at almost every point of the boundary $\partial V_l^{\prime}$. The set of points where $\partial V_l^{\prime}$ may fail to be Lipschitz is an $(n-2)$-dimensional set denoted by $b_l:=\partial\Omega\cap\partial B(p_l,R_l-\delta)$. If it happens that the intersection of boundaries has dimension $n-1$, then a set $V_l^{\prime}$ is Lipschitz and hence we do not have to worry about such a situation.

For $n=2$  sets $b_l$ are sets of disjoint points and for $n\ge 3$ sets $b_l$ are connected. Take such $n$-dimensional neighbourhoods $A_l\supset b_l$ that sets $V_l^{\prime}\setminus A_l$ are Lipschitz. We can also choose $A_l$ so that $d(V_l^{\prime}\setminus A_l,b_l)<\frac{r_{inj}}{2}$. It can be done because each set $V_l^{\prime}$ is "almost Lipschitz" and therefore we only have to take $A_l$ small enough so that the part of its boundary inside $V_l^{\prime}$ is Lipschitz and does not form cusps with $\partial V_l^{\prime}$, which means that $(\partial A_l\cap V_l^{\prime})\cup(\partial V_l^{\prime}\setminus A_l)$ is locally Lipschitz. Since there are finitely many sets $V_l^{\prime}$ the Lipschitz constants of all sets are uniformly bounded. Put $\mathcal{B}_l:=A_l\cap V_l^{\prime}$ and set

\begin{equation*}
V_l^{''}:=
\begin{cases}
V_l^{'} \hspace{5mm} \textnormal{ if } V_l^{'} \textnormal{ is Lipschitz}\\
V_l^{'}\setminus\mathcal{B}_l \hspace{5mm} \textnormal{ if } V_l^{'} \textnormal{ is not Lipschitz}.
\end{cases}
\end{equation*}

Sets $V_l^{''}$ are Lipschitz but they need not cover whole set $\Omega$. Thus we need to deal with sets $\mathcal{B}_l$. 
Divide sets $\mathcal{B}_l$ into subsets $\mathcal{B}_{l,i}$ such that:
\begin{itemize}
\item $|\{i:\mathcal{B}_{l,i}\}|\lesssim\frac{\textnormal{Vol} (\mathcal{B}_l)}{{\rm r_{inj}}^n}\lesssim\frac{\textnormal{Vol} (\Omega)}{{\rm r_{inj}}^n}$ \hspace{5mm} for every $l$.
\item $\diam (\mathcal{B}_{l,i})<{\rm r_{inj}}$ \hspace{5mm} for every $i$ and every $l$,
\item $\bigcup_{i} \mathcal{B}_{l,i}=\mathcal{B}_l$ \hspace{5mm} for every $l$, \hspace{5mm} for every $l$.
\end{itemize}

To construct such a partition, it is sufficient to cover set $\overline{\mathcal{B}}_l$ with sets $B(\widetilde{p}_{l,i},\frac{r_{inj}}{2})\cap\overline{\mathcal{B}}_l$ for points $\widetilde{p}_{l,i}\in\mathcal{B}_l$ and take a finite subcover.

Take sets $C_{l,i}:=B(p_{l,i}, {\rm r_{inj}}-\delta)\cap\Omega$ such that $\mathcal{B}_{l,i}\subset C_{l,i}$ and $\mathcal{B}_{l,i}\subset\subset B(p_{l,i},{\rm r_{inj}}-\delta)$ and $d(\mathcal{B}_{l,i}, \partial B(p_{l,i},{\rm r_{inj}}-\delta))>\frac{{\rm r_{inj}}}{4}$. Sets $C_{l,i}$ do not have to be Lipschitz.

Now we improve a family of sets $C_{l,i}$ to a family of sets $C_{l,i}^{'}$ which are constructed as Lipschitz and $\mathcal{B}_{i,l}\subset\subset C_{i,l}^{'}$. That such $C_{l,i}^{'}$ can be chosen in such a way follows from 
$$d(\mathcal{B}_{l,i}^{comp}, \partial B(p_{l,i},{\rm r_{inj}}-\delta))>\frac{{\rm r_{inj}}}{4}.$$
 Again, because there are finitely many sets $C_{l,i}, C_{l,i}^{'}$ all their Lipschitz constants are uniformly bounded. Hence we obtained that all $\mathcal{B}_l\subset\bigcup_{i}C_{l,i}^{'}$. Therefore we covered our "bad" sets $\mathcal{B}_l$ with Lipschitz sets.

Altogether sets $V_l^{''}$ and $C_{l,i}^{'}$ provide the covering of $\Omega$ with Lipschitz sets. Let us rename and renumber the constructed collection of sets to obtain the covering $\{V_l\}$ of $\Omega$ with Lipschitz sets.

Therefore, we can assume that sets $V_{l}$ are Lipschitz and the claim is proven. 

Observe that $\diam(V_{l}^{''}) \approx {\rm r_{inj}}$ for all $l$. It is true, because the image of a ball centered at the origin under $\exp$ is a ball with the same radius and we took $\delta=\frac{1}{10}{\rm r_{inj}}$. Notice that since the radius of any geodesic ball in $\Om$ does not exceed ${\rm r_{inj}}$, each such a ball has volume comparable to $\left(\frac{9}{10}{\rm r_{inj}}\right)^n$. The same happens for sets $C_{i,l}^{'}$. Therefore, the number $m$ of sets $V_{l}$ required to cover $\Omega$ is 
\[
m\approx \left(\frac{{\rm Vol}(\Omega)}{{\rm r_{inj}}^n}\right)^2.  
\]
The implicit constant depends only on set $\Omega$. The dependence lies in the fact that volume of a ball (with a small enough radius) can be bounded by n-th power of radius multiplied by a function depending on dimension and curvature.

Due to our construction on each set $V_l$ we have normal coordinates. This allows us to write the Laplace-Beltrami equation $\Delta_Mu=0$ in local coordinates as follows:
\begin{equation}\label{lapbel}
\frac{1}{\sqrt{\det g}}\sum_{i=1}^{n}\frac{\partial}{\partial x^{i}}\left(\sum_{j=1}^{n}\sqrt{\det g}g^{ij}\frac{\partial u}{\partial x^{j}}\right)=0,
\end{equation}
where $g$ denotes a metric tensor on M.

From this representation we see that equation $\Delta_M u=0$ on every $V_{l}\subset M$ is equivalent to an equation ${\rm div}(A\nabla u)=0$ on $\exp_{p_{l}}^{-1}(V_{l})\subset\mathbb{R}^{n}$, where the square matrix $A\in M^{n\times n}$ depends on a point on $M$ and 
\begin{equation}\label{A-matrix}
A=\sqrt{\det g}(g^{ij}).
\end{equation}
 Let us notice that any set $\exp^{-1}(V_{l})$ is Lipschitz, because the derivative $|D\exp_{p_{l}}|$ is bounded on each $V_{l}$. Moreover, since there are finitely many sets $V_{l}$, the derivatives $D\exp_{p_{l}}$ are uniformly bounded on $\bar{\Omega}$. Therefore, in maps our harmonic equation locally reduces to an $A$-harmonic one on Lipschitz domains in $\mathbb{R}^n$.

At this point we would like to apply~\cite[Theorem 2.15]{bortz-hoffman}, see also~\cite[Theorem 1.3]{hmm}, which provides conditions on an underlying domain  and a bounded  $A$-harmonic function implying its $\varepsilon$-approximability. Namely, the domain has to satisfy the interior corkscrew condition and its boundary is uniformly rectifiable. Here, we study domains
\[
 W_l:=\exp_{p_{l}}^{-1}(V_{l})\subset B(0,R_l-\delta)\subset T_{p_l}M=\mathbb{R}^{n},\quad l=1,\ldots,m.
\]
 Notice that, by Lemma~\ref{lem-Lip-cork}, $V_l$, and hence $W_l$ satisfy the first condition. Furthermore, since all $V_l$, and hence also all $W_l$ by the above discussion, are Lipschitz, the second condition holds because every Lipschitz set is in particular uniformly rectifiable, by direct verification of Definition 2.7 in~\cite{bortz-hoffman} with $\theta $ and $M_0$ depending on Lipschitz constant of $\Omega$ and curvature of $\Omega$.

As for the assumptions on the $A$-harmonic function $u$, matrix $A$ in~\cite[Theorem 2.15]{bortz-hoffman} has bounded coefficients and defines an elliptic operator (cf. Definition 2.4 in~\cite{bortz-hoffman}). This is the case of matrix in~\eqref{A-matrix}, since $W_l$ are bounded, metric $g$ is smooth and positive definite. Moreover, as in~\cite{bortz-hoffman} coefficients of an $A$-harmonic operator are locally Lipschitz, again by smoothness of $g$. What remains to be checked are conditions (2.5) and (2.6) in Definition~\ref{A-harmonic-def}.

In order to check (2.6) we compute the gradient of $A$, cf. ~\eqref{A-matrix}. For $i,j,k=1,\dots,n$ it holds that
\begin{align}
\frac{\partial a_{ij}}{\partial x^{k}}&=\frac{\partial}{\partial x^{k}}\left(\sqrt{\det g}g^{ij}\right) \nonumber\\
&=\frac{1}{2\sqrt{\det g}}\frac{\partial\det g}{\partial x^{k}}g^{ij}+\sqrt{\det g}\frac{\partial}{x^{k}}g^{ij}
\nonumber\\
&=\frac{1}{2\sqrt{\det g}}\det g\,\textnormal{tr}\left(g^{-1}\frac{\partial}{\partial x^{k}}g\right)g^{ij}+\sqrt{\det g}\frac{\partial}{\partial x^{k}}g^{ij} \label{jac-3}\\
&=\sqrt{\det g}\left[\frac{1}{2}\left(\sum_{a,b} g^{ab}\frac{\partial}{\partial x^{k}}g_{ba}\right)g^{ij}+\frac{\partial}{\partial x^{k}}g^{ij}\right], \nonumber
\end{align}
where in \eqref{jac-3} we use the Jacobi formula for derivative of a matrix determinant. By compactness of $\bar{W_l}$ and continuity of $g, g_{ij}, g^{ij}$ and their derivatives we have that 

\begin{equation*}
C_l:=\sup_{x\in \bar{W_l}}\{|g(x)|,|g_{ij}(x)|,|g^{ij}(x)|,|\frac{\partial}{\partial x^k}g_{ij}(x)|,|\frac{\partial}{\partial x^k}g^{ij}(x)|\}<\infty
\end{equation*}

Let further $d_l:=\sup_{x\in\bar{W_l}}\dist(x,\partial W_l)$. Then for all $i,j,k=1,\dots,n$ it holds that
\begin{equation}\label{a_ij}
\left|\frac{\partial a_{ij}}{\partial x^{k}}(x)\right|\le\sqrt{n!C_{l}^{n}}\left(\frac{1}{2}n^2C_{l}^{3}+C_{l}\right)\le\frac{C(n,C_l)}{d_l}\le\frac{\widetilde{C}}{\dist(x,\partial W_{l})}.
\end{equation}
Therefore, (2.6) holds. It remains to prove (2.5). First, since $W_l$ are Lipschitz, then $\partial W_l$ satisfy the Ahlfors--David regularity condition (cf. Definition  2.1 in~\cite{bortz-hoffman}). This observation, together with the above estimates of partial derivatives of $a_{ij}$ imply the following for $x\in\partial W_l$ and $0<r<\diam(\partial W_l)$
\begin{align*}
\frac{1}{H^{n-1}(B(x,r)\!\cap\!\partial W_l)} \int_{B(x,r)\cap W_l}|\nabla A(X)|dX  & \lesssim\frac{1}{r^{n-1}}\int_{B(x,r)\cap W_l}\frac{1}{d_l}dX \\
&\lesssim \frac{1}{r^{n-1}}\int_{B(x,r)\cap W_l}\frac{1}{r}dX \\
&=\frac{1}{r^{n}}\int_{B(x,r)\cap W_l}dX\leq \frac{H^n(B(x,r))}{r^n}\\
&\le C(n). 
\end{align*}
Upon taking the supremum over $x\in\partial W_l$ we arrive at (2.6). In consequence, Theorem 2.15 in~\cite{bortz-hoffman} gives us the $\varepsilon$-approximation of $u$ by BV functions on sets $\exp_{p_{l}}^{-1}(V_{l})$ for $l=1,\ldots, m$. 

In the last step of the proof, we glue together the BV functions constructed above, to obtain one BV function approximating $u$ on $\Omega$. We do it in a following manner. For each $V_{l}$, for $l=1,\ldots, m$, denote by $\phi_{l}$ a BV function approximating $u$ on $V_{l}$. Let $\sigma$ be a permutation of indices $l=1,\dots,m$ satisfying following conditions: The value of permutation $\sigma(1)$, is any number from $1$ to $m$. The value of $\sigma(2)$ is any index $l$ such that $V_{\sigma(1)}\cap V_{\sigma(2)}\neq\emptyset$. Then $\sigma(3)$ is such an index that $V_{\sigma(3)}\cap(V_{\sigma(1)}\cup V_{\sigma(2)})\neq\emptyset$ and so on. Let us denote by $\phi_{\sigma(j)}$ a BV function approximating $u$ on $V_{\sigma(j)}$. On a set $V_{\sigma(2)}\setminus V_{\sigma(1)}$ take a function $\phi_{\sigma(2)}|_{V_{\sigma(2)}\setminus V_{\sigma(1)}}$.  Let function $\phi^1:V_{\sigma(1)}\cup V_{\sigma(2)} \to \mathbb{R}$ be defined as follows
\begin{equation*}
 \phi^1=
	\begin{cases}
\phi_{\sigma(1)}  & \hbox{on }\ V_{\sigma(1)}\\
 \phi_{\sigma(2)} & \hbox{on }\ V_{\sigma(2)}\setminus V_{\sigma(1)}.
	\end{cases}
\end{equation*}
Next choose any set $V_{\sigma(3)}$ that has nonempty intersection with $V_{\sigma(1)}\cup V_{\sigma(2)}$ and a function $\phi_{\sigma(3)}|_{V_{\sigma(3)}\setminus(V_{\sigma(1)}\cup V_{\sigma(2)})}$. Similarly as above, we define function $\phi^2$ as follows:
\begin{equation*}
\phi^2=
\begin{cases}
\phi_{\sigma(1)}  & \hbox{on }\ V_{\sigma(1)}\\
\phi_{\sigma(2)} & \hbox{on }\ V_{\sigma(2)}\setminus V_{\sigma(1)}\\
\phi_{\sigma(3)} & \hbox{on }\ V_{\sigma(3)}\setminus (V_{\sigma(1)}\cup V_{\sigma(2)}).
\end{cases}
\end{equation*}
After finitely many steps we construct a function $\phi=\phi^{m-1}$ defined on $\Omega$. Such a function has bounded variation, since each $\phi_{l}$ has bounded variation and sets $\partial V_{l}\cap\partial V_{k}$ where the function $\phi$ may have additional jumps is of measure zero and there are finitely many of them, so they do not affect the variation of $\phi$. The proof of Lemma~\ref{BV-mfld} is therefore completed.
\end{proof}
It is worth seeing how Laplace-Beltrami equation looks like in the case of an $n$-dimensional sphere.
\begin{example}
Let us consider an n-dimensional sphere $S^n$ and a stereographic projection. Denote the coordinates on a sphere by $(t_1,\dots,t_n)$ and $t^2=\sum t_{i}^{2}$. Then $\Delta_{S^n}u$ reads
\begin{align*}
\Delta_{S^n} u(t_1,\dots,t_n)&=\bigg(\frac{t^2+1}{2}\bigg)^n\sum_{i=1}^{n}\frac{\partial}{\partial t_i}\bigg(\bigg(\frac{2}{t^2+1}\bigg)^{n-2}\frac{\partial u}{\partial t_i}\bigg)\\
&=
\begin{cases}
-\frac{(t^2+1)(n-2)}{2}\sum_{i=1}^{n}t_i\frac{\partial u}{\partial t_i}+\frac{(t^2+1)^2}{4}\sum_{i=1}^{n}\frac{\partial^2 u}{\partial t_i^2} \hspace{5mm} \textnormal{for } n\ge 3,\\
\frac{(t^2+1)^2}{4}\sum_{i=1}^n\frac{\partial^2 u}{\partial t_i^2} \hspace{5mm} \textnormal{for } n=2.
\end{cases}
\end{align*}
Hence, if $u$ is harmonic i.e. $\Delta_{S^n}u=0$, then the associated matrix $A={\rm diag}\left(\left(\frac{2}{t^2+1}\right)^{n-2}\right)$. Set $d_{\Omega}:=\sup_{x\in\Omega}\dist(x,\partial\Omega)\le\diam(\partial\Omega)$ and since $\Omega$ is bounded we have $|t_i|\le K_{\Omega}$ for some constant $K_{\Omega}>0$. By direct computations:
\begin{align*}
\frac{\partial a_{ii}}{\partial t_k}&=(n-2)\left(\frac{2}{1+t^2}\right)^{n-1}t_k,\\
|\nabla A|&=(n-2)\sqrt{n}\left(\frac{2}{t^2+1}\right)^{n-1}t\le (n-2)\sqrt{n}2^{n-1}\sqrt{n}K_{\Omega}\\
&=\frac{(n-2)\sqrt{n}2^{n-1}\sqrt{n}K_{\Omega}d_{\Omega}}{d_{\Omega}}=\frac{C_{\Omega}}{d_{\Omega}}\le \frac{C_{\Omega}}{\dist(x,\partial\Omega)}.
\end{align*}
Therefore, condition (2.5) in Definition \ref{A-harmonic-def} reads:
\begin{align*}
\frac{1}{r^{n-1}}\int_{B(x,r)\cap\Omega}|\nabla A|&\le\frac{1}{r^{n-1}}\int_{B(x,r)\cap\Omega}\frac{C_{\Omega}}{d_{\Omega}}\lesssim\frac{1}{r^{n-1}}\int_{B(x,r)\cap\Omega}\frac{1}{r}\approx\frac{1}{r^n}r^n=1.
\end{align*}
 
\end{example}

\begin{remark}
Notice that everything that was proved so far in this Section applies as much to $A$-harmonic functions. Indeed, in local coordinates equation (\ref{A-harmonic}) takes the form

\begin{equation*}
\frac{1}{\sqrt{\textnormal{det}g}}\sum_i\frac{\partial}{\partial x^i}\left(\sum_l\sum_t\sqrt{\textnormal{det}g}a_{il}(x)g^{lt}\frac{\partial u}{\partial x^t}\right)=0.
\end{equation*}

Equivalently, it can be written as 

\begin{equation}
\frac{1}{\sqrt{\textnormal{det}g}}\textnormal{div}(B(x)\nabla u)=0,
\end{equation}

where $B(x)$ is the matrix with coefficients $b_{ij}(x)=\sum_{l=1}^{n}\sqrt{\textnormal{det}g(x)}(a_{il}(x)g^{lj}(x))$.

Hence, Theorem \ref{thm-e-approx} holds for A-harmonic functions on Lipschitz domains on manifolds. In order to show this, it is enough to prove that $B(x)$ satisfies (2.5) and (2.6).
Let us start with (2.6). We have

\begin{align}
\frac{\partial b_{ij}}{\partial x^{k}}&=\frac{\partial}{\partial x^{k}}\left(\sum_l\sqrt{\det g}a_{il}(x)g^{lj}\right) \nonumber\\
&=\sum_l\left(\left(\frac{\partial}{\partial x^k}\sqrt{\textnormal{det}g}\right)a_{il}(x)g^{lj}+\sqrt{\textnormal{det}g}\frac{\partial a_{il}(x)}{\partial x^k}g^{lj}+\sqrt{\textnormal{det}g}a_{il}(x)\frac{\partial g^{lj}}{\partial x^k}\right)\nonumber\\
&=\sum_l\left(\frac{1}{2\sqrt{\det g}}\det g \textnormal{ tr}\left(g^{-1}\frac{\partial g}{\partial x^k}\right)a_{il}(x)g^{lj}+\sqrt{\det g}\frac{\partial a_{il}(x)}{\partial x^{k}}g^{lj}+\sqrt{\det g}a_{il}(x)\frac{\partial g^{lj}}{x^{k}}\right)\nonumber\\
&=\sqrt{\det g}\sum_l\left[\frac{1}{2}\left(\sum_{a,b}g^{ab}\frac{\partial g_{ba}}{\partial x^k}\right)a_{il}(x)g^{lj}+\frac{\partial a_{il}(x)}{\partial x^{k}}g^{lj}+a_{il}(x)\frac{\partial g^{lj}}{x^{k}}\right].\nonumber
\end{align}

Denote by $M:=\|A\|_{L^{\infty}}(\Omega)$ and let constants $C_l$ be as in the proof of Theorem \ref{thm-e-approx}. Then we have

\begin{align}\label{A-nier}
\left|\frac{\partial b_{ij}}{\partial x^{k}}\right|&\le\sqrt{n!C_{l}^{n}}\sum_l\left|\frac{1}{2}n^2 C_l^3 a_{il}(x)+\frac{\partial a_{il}(x)}{\partial x^{k}}C_l+a_{il}(x)C_l\right|\nonumber\\
&\le\sqrt{n!C_{l}^{n}}n\left(\frac{1}{2}n^2 C_l^3 M+|\nabla A|C_l+MC_l\right)\nonumber\\
&\le\sqrt{n!C_{l}^{n}}n\left(\frac{1}{2}n^2 C_l^3 M+MC_l\right)+\sqrt{n!C_{l}^{n}}nC_l|\nabla A|\nonumber\\
&\lesssim\frac{1}{d}+\frac{1}{\textnormal{dist}(x,\partial W_l)}\\
&\lesssim\frac{1}{\dist(x,\partial W_{l})},\nonumber
\end{align}

where in the inequality (\ref{A-nier}) we use estimate \ref{a_ij}.
Let us now proceed to proving (2.5). We have

\begin{align}
\frac{1}{H^{n-1}(B(x,r)\!\cap\!\partial W_l)}& \int_{B(x,r)\cap W_l}|\nabla B(X)|dX \nonumber\\
&\lesssim\frac{1}{r^{n-1}}\int_{B(x,r)\cap W_l}\frac{1}{\diam W_l}+\sqrt{n!C_l^n}nC_l|\nabla A|dX \nonumber\\
&\lesssim C(n)+C(n,M),
\end{align}

where in the last inequality we use the property (2.5) for $A$ and \ref{a_ij}. 

Therefore, Theorem \ref{thm-e-approx} extends to the setting of A-harmonic functions with $A=A(x)$ as in Definition \ref{A-harmonic-def}.
\end{remark}

\section{Quantitative Fatou property}
The goal of this section is to apply the $\varepsilon$-approximability to prove the Quantitative Fatou Property on Lipschitz domains in Riemannian manifolds.

Since the choice of good maps and their propertie will be important in what follows, let us briefly recall some necessary facts.

If $M$ is a Riemannian manifold, then we always have a chart preserving the Lipschitzness of a set, namely $\exp_p^{-1}$ taken on such set $U\subset M$ that $\exp_p^{-1}$ is a diffeomorphism,see the proof of Theorem \ref{thm-e-approx} above.

However, we can take any chart that preserves bounded Lipschitz sets. Indeed, the following lemma shows that any chart would do as long as Lipschitz domains are 1-connected at the boundary and the image is bounded. Therefore, we do not need to necessarily use exponential maps. Nevertheless, we use them because they are convenient and handy to work with, but any chart with similar properties would be sufficient. By similar properties we mean that:
\begin{itemize}
\item we can take such sets $U_i$ as charts that in each set there is contained a ball with radius uniformly bounded from below,
\item the Lipschitz constants of maps are uniformly bounded from above.
\end{itemize}
The only difference between the above choice of Lipschitz maps and the exponential map is that now different charts will have different Lipschitz constants. However, it only affects the constants in the estimates, which yields that all results are still true.

In the next definition we recall topological notion that plays a crucial role in the studies of the extension of mappings, including the continuous and homeomorphic extensions, see e.g. theorem in \cite[Chapter 2, Section 17]{vaisala}. Moreover, see \cite{adamowicz}, where the notion of prime ends is used to determine the existence of extension.

\begin{definition}
Let $X$ be a metric space. We will say that a set $U\subset X$ is 1-connected at the boundary if for every point $x\in\partial U$ there exists its arbitrarily small neighbourhood $U_x$ such that $U\cap U_x$ is connected.
\end{definition}

An example of a set that is not 1-connected is a slit disc. It is a disc $B(p,r)\subset\mathbb{R}^2$ with a removed line segment joining the center $p$ with a boundary, say at point $x$. Then at point $x$ any neighbourhood $U_x$ has a property that $B(p,r)\cap U_x$ has two connected components for small enough sets $U_x$.

\begin{lemma}\label{good-maps}
Let $U\subset (X,d)$ be an open, connected, precompact and 1-connected at the boundary set. Let further $h:U\rightarrow h(U)$ be a homeomorphism such that $h(U)$ is bounded in $(Y, \tilde{d})$. Then for any bounded Lipschitz subset $U^{\prime}\subset U$ it holds that $h(U^{\prime})$ is also bounded Lipschitz in $h(U)$.
\end{lemma}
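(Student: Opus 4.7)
The plan is to verify the two defining properties of a bounded Lipschitz set for $h(U')$: boundedness of the image and existence, at every boundary point of $h(U')$, of a local chart making $h(U')$ appear as a Lipschitz subset of $\mathbb{R}^n$. Boundedness is immediate since $h(U') \subset h(U)$ and $h(U)$ is bounded by hypothesis, so the substantive work lies in building a suitable local Lipschitz chart near every point of $\partial h(U')$. The natural strategy is to split $\partial U'$ according to whether its points lie in the interior $U$ or on the boundary $\partial U$, and handle each regime by different means.

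For a point $y = h(x)$ with $x \in \partial U' \cap U$: since $U'$ is locally Lipschitz at $x$, there exist an open neighborhood $V \subset U$ of $x$ and a chart $f : V \to \mathbb{R}^n$ with $f(V \cap U')$ Lipschitz in $\mathbb{R}^n$. Because $h$ is a homeomorphism of $U$ onto $h(U)$ (and, in the intended application, satisfies the stronger chart regularity spelled out just after the lemma, in particular uniformly bounded Lipschitz constants), the composition $f \circ h^{-1}$ is a valid chart on $h(V)$, and $(f \circ h^{-1})(h(V) \cap h(U')) = f(V \cap U')$ is Lipschitz by construction. Hence the local Lipschitz property of $h(U')$ at $y$ is inherited directly from that of $U'$ at $x$, and a finite subcover of the set of such points in $\partial U'$ gives uniform control.

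The main obstacle is points $y \in \partial h(U')$ whose preimage, in a limiting sense, sits on $\partial U$, since $h$ is not a priori defined there. Here the 1-connectedness at the boundary of $U$ is the decisive hypothesis: combined with the boundedness of $h(U)$, it guarantees via the classical prime-end type extension theorem referenced in the paper that $h$ admits a continuous extension $\bar h : \bar U \to \overline{h(U)}$, and in particular that $\bar h$ sends $\partial U$ into $\partial h(U)$ without collapse or branching. Without 1-connectedness, two sequences in $U$ converging to the same point of $\partial U$ could produce $h$-images with distinct cluster points, and $h(U')$ could fold or pinch at $\partial U$ in a way destroying any Lipschitz graph structure. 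Once $\bar h$ is available, the Lipschitz chart giving the local graph representation of $\partial U'$ at a point $x \in \partial U' \cap \partial U$, composed with $\bar h^{-1}$, provides the desired chart at the corresponding point of $\partial h(U')$. Finally, compactness of $\partial h(U')$ (inherited from $\bar U$ being precompact in $X$ and $\bar h$ continuous) yields a finite cover by such local Lipschitz charts with uniform constants, which completes the verification.
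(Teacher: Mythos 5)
Your overall route coincides with the paper's: the crux in both is to extend $h$ to a homeomorphism $\bar h:\overline U\to\overline{h(U)}$ using 1-connectedness at the boundary together with boundedness of $h(U)$, and then to transport the Lipschitz structure. The genuine gap is that you treat this extension as a black box, invoking ``the classical prime-end type extension theorem referenced in the paper.'' The paper cites V\"ais\"al\"a's prime-end material only as background motivation for the notion of 1-connectedness; no ready-made theorem is applied, and none applies in this generality (Carath\'eodory/prime-end extension results concern conformal or quasiconformal maps and require hypotheses on the \emph{image} domain, such as local connectivity of its boundary, not merely boundedness). The entire content of the paper's Appendix B is precisely the construction you skip: showing that for $x\in\partial U$ every sequence $x_n\to x$ yields $h(x_n)$ with one common limit (this is where 1-connectedness and boundedness of $h(U)$ enter, via the connectedness of $h(U\cap U_x)$ for arbitrarily small $U_x$), that the resulting $\bar h$ is well defined and continuous, maps $\partial U$ into $\partial h(U)$, is injective and onto $\overline{h(U)}$, and hence is a homeomorphism by compactness of $\overline U$. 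Your proposal gestures at why 1-connectedness prevents branching but carries out none of these steps, and you then use $\bar h^{-1}$ near $\partial h(U')\cap \partial h(U)$ --- i.e.\ exactly the injectivity and boundary-to-boundary facts you have not established.

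A secondary point: composing the chart $f$ with $h^{-1}$ (resp.\ $\bar h^{-1}$) only produces a topological chart; to conclude that the image is Lipschitz in the metric sense one needs Lipschitz control of $h$ and $h^{-1}$. The paper resolves this globally, by upgrading $\bar h$ to a bi-Lipschitz homeomorphism (tersely, from compactness and the regularity of the maps actually used in the application, cf.\ the discussion preceding the lemma) and then noting that bi-Lipschitz maps preserve bounded Lipschitz sets, which makes your case split of $\partial U'$ into $\partial U'\cap U$ and $\partial U'\cap\partial U$ unnecessary. Your appeal to ``uniformly bounded Lipschitz constants in the intended application'' plays the same role and is in the spirit of the paper, but it should be stated as a hypothesis or derived rather than imported informally.
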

The proof of the lemma is in Appendix B.

\begin{remark}
One can approach defining the counting function either independently of maps or in maps. In what follows we take the first approach as it is more natural in the manifold setting. Nevertheless we would like to briefly comment on the approach via maps. Namely, at every boundary point we can choose the local coordinates and in those coordinates define locally the $N$ function as in Definition 3.11. Then we cover $\partial \Om$ with balls of radius $R=\frac{1}{20}{\rm r_{inj}}$ centered at some points $p_i\in\partial\Omega$. Define counting function in set $B(p_1,R)\cap\partial\Omega$ using a chart that preserves Lipschitzness on that set, e.g. $\exp^{-1}_{p_1}$. Then proceed inductively. Define a counting function on set $(B(p_2,R)\setminus B(p_1,R))\cap\partial\Omega$ using a chart on that set, and continue until all boundary is covered. For different choices of charts we obtain deifferent counting functions, but the Quantitative Fatou Property holds for all of them with different constants.
Now, since we already know that a harmonic function defined on $\Omega$ is $\varepsilon$-approximable for every $\varepsilon$ (Section 4) we may apply Lemma 2.9 in \cite{kkpt} and get the Quantitative Fatou Property for Lipschitz domains in complete Riemannian manifolds on balls with radii $r<c<r_{inj}$. 
\end{remark}

We would like to define a counting function without using any chart.
For the readers convenience we recall Definition \ref{counting-function-definition}:

$\Gamma^r(p)$ a cone at some point $p\in\partial\Omega$. Let $u$ be a harmonic function defined on $\Omega$. Denote by $d$ a Riemannian distance in manifold $M$. Fix $\varepsilon>0$, $0<\theta<1$ and $0<r<1$. We will say that a sequence of points $Q_n\in\Gamma^r(p)$ is $(r,\varepsilon,\theta,p)$-admissible for u if 
\begin{align}\label{def-count-function}
|u(Q_n)-u(Q_{n-1})|\ge\varepsilon,\\
d(Q_n,p)<\theta d(Q_{n-1},p).\nonumber
\end{align}
Set 
$$N(r, \varepsilon, \theta)(p)=\sup\{k: \textnormal{there exists an } (r, \varepsilon, \theta, p)\textnormal{-admissible sequence of length }k\}.$$
We will call $N$ a counting function.

We see that a counting function $N$ defined in such a way does not depend on a chosen chart. We would like to prove the Quantitative Fatou Property for such counting function, since it  would be desirable that the QFP is independent of charts and requires only the Riemannian structure.
Recall Definition \ref{cone-definition}.

In what follows, we apply this definition to $r_1$ and  $r_2$ the distances of $p$ to consecutive points in an $(r,\varepsilon,\theta,p)$-admissible sequence, cf. Def. \ref{counting-function-definition} or (\ref{def-count-function}) above. Moreover, if point $p$ is fixed , then we skip writing it and denote $\Gamma_{r_1,r_{2}}:=\Gamma_{r_1,r_{2}}(p)$.

\begin{lemma}\label{lem-o-krzywej}
Let $\Omega\subset M$ be Lipschitz domain and $\Gamma_{r_1,r_2}$, the doubly truncated cone with the aperture $\alpha$, be connected. Let further $x_1\in\Gamma_{r_1,r_2}\cap S(p,r_1)$, $x_2\in\Gamma_{r_1,r_2}\cap S(p,r_2)$ be elements of $(r,\varepsilon,\theta,p)$-admissible sequence corresponding to $r_1$ and $r_2$, respectively. Denote by $\widetilde{\Gamma}_{r_1,r_2}(p)$ a doubly truncated cone with the aperture $\widetilde{\alpha}>\alpha$, so that $\Gamma_{r_1,r_2}\subset\widetilde{\Gamma}_{r_1,r_2}$. Then there exists a curve $\gamma:[r_2,r_1]\rightarrow\widetilde{\Gamma}_{r_1,r_2}$ with $\gamma(r_2)=x_2$ and $\gamma(r_1)=x_1$ and such that $\gamma(r)\in\widetilde{\Gamma}_{r_1,r_2}$ for all $r\in [r_2,r_1]$, with the following properties:
\begin{itemize}
\item $l(\gamma)\le Kd(x_1,x_2)$ for some $K>0$, where $l(\gamma)$ denotes length of $\gamma$,
\item $|\frac{\partial\gamma}{\partial r}|\le C$ for some constant $C$,
\item $\gamma$ intersects every sphere $S(p,r)$ for $r_1<r<r_2$ exactly once.
\end{itemize}
\end{lemma}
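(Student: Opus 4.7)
My plan is to construct $\gamma$ explicitly in normal coordinates based at $p$. Since the truncated cone lies inside $B(p,r_1)$ and we may assume $r_1<{\rm r_{inj}}$, the exponential chart $\exp_p:B(0,{\rm r_{inj}})\to M$ covers the relevant portion of $\Omega$, and we may write $x_1=\exp_p(r_1 v_1)$ and $x_2=\exp_p(r_2 v_2)$ for unit vectors $v_1,v_2\in T_pM$. Denote by $\theta_0\in[0,\pi]$ the angle between them and let $v(r)$ be the constant-angular-speed great-circle interpolation on the unit sphere in $T_pM$ joining $v(r_2)=v_2$ to $v(r_1)=v_1$. Then I set $\gamma(r):=\exp_p(r\,v(r))$; by construction $d(p,\gamma(r))=r$, so $\gamma$ meets every sphere $S(p,r)$ with $r\in(r_2,r_1)$ exactly once.

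\textbf{The quantitative bounds.} In the tangent space one has $\partial_r\gamma(r)=d_{rv(r)}\exp_p\bigl(v(r)+r\dot v(r)\bigr)$, with $|v(r)|\equiv 1$ and $|\dot v(r)|=\theta_0/(r_1-r_2)$ (arc-length parametrization). The admissibility condition $r_2<\theta r_1$ forces $r_1-r_2>(1-\theta)r_1$, so $r|\dot v(r)|\le \theta_0/(1-\theta)\le \pi/(1-\theta)$, and combined with the uniform bound on $|d\exp_p|$ inside the injectivity ball this gives the pointwise estimate $|\partial_r\gamma(r)|\le C$. Integrating yields $l(\gamma)\lesssim (r_1-r_2)+r_1\theta_0$. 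For the lower bound on $d(x_1,x_2)$, comparison with the Euclidean picture in normal coordinates gives $d(x_1,x_2)^2\gtrsim (r_1-r_2)^2+r_1 r_2(1-\cos\theta_0)\gtrsim \max\bigl((r_1-r_2)^2,\,r_2^2\theta_0^2\bigr)$. Combined with $r_1\le (r_1-r_2)/(1-\theta)$ from admissibility, this produces $l(\gamma)\le K\,d(x_1,x_2)$ with $K$ depending only on $\theta$ and the metric bounds on the chart.

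\textbf{Main obstacle.} The most delicate point is checking that $\gamma(r)\in \widetilde\Gamma_{r_1,r_2}$, namely $d(\gamma(r),\partial\Omega)\ge r/(1+\widetilde\alpha)$. Since $x_i\in \Gamma(p)$, one has $d(x_i,\partial\Omega)\ge r_i/(1+\alpha)$, and I would bound $d(\gamma(r),\partial\Omega)\ge d(x_i,\partial\Omega)-d(\gamma(r),x_i)$ for whichever endpoint $x_i$ is closer. A direct normal-coordinate computation shows $d(\gamma(r),x_i)\lesssim |r-r_i|+r\,\theta_i$, where $\theta_i\le\theta_0$ is the angular arclength from $v(r)$ to $v_i$, so the fractional deficit $d(\gamma(r),x_i)/r$ is controlled by a constant times $\theta_0$ plus a term bounded by $(1-r_2/r_1)$. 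Choosing $\widetilde\alpha=\alpha+C\theta_0$ then suffices; if $\theta_0$ is not a priori small, one first subdivides $[0,\theta_0]$ into finitely many angular pieces, carries out the local cone-widening argument on each, and concatenates. The bi-Lipschitz comparison between the manifold distance $\dist(\cdot,\partial\Omega)$ and the Euclidean distance to $\partial(\exp_p^{-1}(\Omega))$, which holds on the chart because $\Omega$ is Lipschitz and $d\exp_p$ is two-sidedly bounded, is what lets this tangent-space estimate be transferred back to $M$. This cone-membership verification, rather than the length or sphere-intersection properties, is where the Lipschitz structure of $\Omega$ is essential.
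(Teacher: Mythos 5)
Your construction is genuinely different from the paper's and, for three of the four required properties, cleaner. You build a single smooth curve $\gamma(r)=\exp_p(r\,v(r))$ where $v(r)$ is a constant-angular-speed geodesic on the unit sphere of $T_pM$; the sphere-intersection property is then automatic from $|v(r)|\equiv 1$, and the derivative and length bounds follow from $r|\dot v(r)|\le\theta_0/(1-\theta)\le\pi/(1-\theta)$ together with $r_1\le(r_1-r_2)/(1-\theta)$ and $d(x_1,x_2)\ge r_1-r_2$. The paper instead uses a two-piece curve: a radial segment on the ray through $x_2$ from radius $r_2$ up to $\hat r=(1-\delta_{\alpha,\theta})r_1$, followed by a quasidiagonal of a thin two-dimensional quasirectangle $K_{r_1,\hat r}$ (a wedge of radial thickness $\delta_{\alpha,\theta}r_1$, with $\delta_{\alpha,\theta}$ small in terms of $\widetilde\alpha-\alpha$, $\alpha$, $\theta$). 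The paper's design deliberately confines all of the angular motion to that thin annulus and reduces cone-membership to a ball-inclusion claim at boundary points of $\Gamma_{r_1,r_2}$.

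The cone-membership step in your proposal, which you correctly flag as the main obstacle, does not close as written. The chain
$d(\gamma(r),\partial\Omega)\ge d(x_i,\partial\Omega)-d(\gamma(r),x_i)$ with $d(\gamma(r),x_i)\lesssim |r-r_i|+r\theta_i$
cannot yield $d(\gamma(r),\partial\Omega)\ge r/(1+\widetilde\alpha)$ in the middle of $[r_2,r_1]$: there $|r-r_i|\gtrsim\tfrac12(r_1-r_2)\gtrsim\tfrac12(1-\theta)r_1$, a deficit comparable to $r$ itself, while the margin you have available, roughly $r\bigl(\tfrac{1}{1+\alpha}-\tfrac{1}{1+\widetilde\alpha}\bigr)$, is only of order $r(\widetilde\alpha-\alpha)$ and can be arbitrarily small. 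Consequently the inequality fails whenever $\widetilde\alpha-\alpha$ is small or $\theta$ is not close to $1$. Your proposed remedy of subdividing $[0,\theta_0]$ only attacks the angular term $r\theta_i$; it does nothing for the radial term $|r-r_i|$, which is the dominant one. The curve itself is very likely admissible (in the half-space model one checks directly that $\sin\phi(r)$ is concave in $r$ and hence controlled by its endpoint values), but establishing this must exploit the approximate conicality of $\Gamma(p)$ near $p$ for a Lipschitz domain, not merely the $1$-Lipschitz continuity of $\dist(\cdot,\partial\Omega)$, which is far too lossy. The paper avoids the issue precisely because its angular motion happens inside a radial window of width $\delta_{\alpha,\theta}r_1$ chosen small enough to fit under the aperture gap $\widetilde\alpha-\alpha$.
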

The set $\Gamma_{r_1,r_2}$ may fail to be Lipschitz, because at the points of intersection of a cone $\Gamma_r$ and a sphere $S(p,r_2)$ the regularity of the boundary of $\Gamma_{r_1,r_2}$ may worsen, for instance be only H{\"o}lder as some cusps may occur. That is why we take a bigger set $\widetilde{\Gamma}_{r_1,r_2}$.

\begin{proof}
First, by using the exponential map we can reduce the discussion to the ambient space $\mathbb{R}^n$. Notice also that the Lipschitzness of $\Omega$ implies that by taking $\Gamma_R$ with $R$ small enough, we may ensure that $\Gamma_{r_1,r_2}$ is connected for every pair $r_1, r_2$. It is enough to consider such $R$ that for every point $x\in B(p,R)$ the distance $d(x,\partial\Omega)$ is achieved at some point $y\in\partial\Omega\cap B(p,R)$. Such $R$ exists because $\partial\Omega$ is compact and it is Lipschitz. Moreover, such $R$ depends on Lipschitz constant of $\partial\Omega$.

Note that for small enough $R$, the boundary $\partial\Gamma_R$ does not "turn". By turn we mean the following property. Take a tangent space to $\partial\Omega$ at $p$, denoted by $T_p(\partial\Omega)$. Such a space exists at almost every point $p\in\partial\Omega$, because $\partial\Omega$ is a Lipschitz set. Moreover, that space is an $(n-1)$-dimensional subspace of $n$-dimensional tangent space $T_p(M)$. Then any line perpendicular to $T_p(\partial\Omega)$ intersects $\partial\Gamma_{R}$ as long as that line is close enough to point $p$. Furthermore, such a line intersects $\partial\Gamma_{R}$ at least twice: once when it intersects the surface, where $d(q,p)=(1+\alpha)d(q,\partial\Omega)$ and the second time when it intersects sphere $S(p,R)$. However, it can occur that the surface defined by the equation $d(q,p)=(1+\alpha)d(q,\partial\Omega)$ is intersected more than once. If it happens, then we say that that $\partial\Gamma_R$ turns, whereas if there are only two points of intersection we will say that $\partial\Gamma_R$ does not turn. Again, due to compactness and Lipschitz property of $\partial\Omega$ we can ensure that for $R$ small enough $\partial\Gamma_R$ does not turn. 
Moreover, compactness of $\overline{\Omega}$ allows us to choose $R$ small enough satysfing all the aforementioned properties at every point of boundary $\partial\Omega$ i.e. sets $\Gamma_{r_1,r_2}(p)$ are connected for all $p\in\partial\Omega$ and all $0<r_1<r_2\le R$ and $\Gamma_R(p)$ does not turn. 

Next, let us prove the following observation.
\begin{claim}
There exists a constant $\delta_{\alpha}>0$ such that for every point $x\in\partial\Gamma_{r_1,r_2}$ a ball $B(x,\delta_{\alpha} d(x,p))\subset\widetilde{\Gamma}_{(1+\delta_{\alpha})r_1,(1-\delta_{\alpha})r_2}$.
\end{claim}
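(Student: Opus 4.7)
The plan is to decompose $\partial\Gamma_{r_1,r_2}$ into three natural pieces according to where $x$ lies on the boundary, and then to verify the three defining conditions for membership in $\widetilde{\Gamma}_{(1+\delta)r_1,(1-\delta)r_2}$ by a direct triangle-inequality calculation. Concretely, $\partial\Gamma_{r_1,r_2}$ is the union of the lateral part on $\partial\Gamma(p)$ (where $d(x,p)=(1+\alpha)d(x,\partial\Omega)$), the outer cap $S(p,r_1)\cap\overline{\Gamma(p)}$ (where $d(x,p)=r_1$), and the inner cap $S(p,r_2)\cap\overline{\Gamma(p)}$ (where $d(x,p)=r_2$). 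On each piece I would fix an arbitrary $y$ with $d(y,x)<\delta\, d(x,p)$ and use the Riemannian triangle inequality to record the estimates
\begin{equation*}
(1-\delta)d(x,p)<d(y,p)<(1+\delta)d(x,p),\qquad d(y,\partial\Omega)>d(x,\partial\Omega)-\delta\, d(x,p).
\end{equation*}
These are legitimate because the radius $R$ was already chosen so that $\overline{B(p,R)}$ sits inside the injectivity radius at $p$ and the nearest-point projection onto $\partial\Omega$ is realized in $\partial\Omega\cap B(p,R)$.

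Next I would check the three membership conditions case by case. The radial constraints $(1-\delta)r_2\le d(y,p)\le(1+\delta)r_1$ follow at once from $r_2\le d(x,p)\le r_1$ combined with the first estimate above. For the aperture condition $d(y,p)\le(1+\widetilde{\alpha})d(y,\partial\Omega)$, on the lateral part I would substitute $d(x,\partial\Omega)=d(x,p)/(1+\alpha)$ into both bounds; after dividing through by $d(x,\partial\Omega)$ and rearranging, the condition reduces to the elementary inequality
\begin{equation*}
\delta(1+\alpha)(2+\widetilde{\alpha})\le\widetilde{\alpha}-\alpha,
\end{equation*}
which is satisfied by any $\delta\le(\widetilde{\alpha}-\alpha)/[(1+\alpha)(2+\widetilde{\alpha})]$. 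The two cap cases proceed along the same lines using the weaker identity $d(x,\partial\Omega)\ge d(x,p)/(1+\alpha)$ that is valid throughout $\overline{\Gamma(p)}$, and they yield constraints of the same form, possibly with slightly smaller numerical constants. Taking $\delta_\alpha$ to be the minimum of the three constants produced in this way then gives the claim.

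The algebra itself is routine; the main obstacle I anticipate is simply the need to be careful that the triangle inequality and the distance-to-boundary estimate behave sharply in the Riemannian rather than Euclidean setting. This is precisely why $R$ was prescribed so carefully at the beginning of the proof of Lemma~\ref{lem-o-krzywej}, and it is also the reason the enlarged aperture $\widetilde{\alpha}>\alpha$ was introduced in the statement of the lemma: without a strict enlargement the resulting $\delta_\alpha$ would degenerate to zero and the claim would become vacuous.
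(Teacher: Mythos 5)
Your proposal is correct and takes essentially the same route as the paper: a direct triangle-inequality computation combined with the cone relation, leading to exactly the constraint $\delta_{\alpha}\le\frac{\widetilde{\alpha}-\alpha}{(1+\alpha)(2+\widetilde{\alpha})}$ that the paper derives. The only cosmetic difference is your three-case decomposition of $\partial\Gamma_{r_1,r_2}$; since the argument only ever needs the inequality $d(x,p)\le(1+\alpha)d(x,\partial\Omega)$ (valid on all of $\overline{\Gamma(p)}$, caps included) rather than the lateral equality the paper invokes, the cap cases give the same constant and your split is harmless, indeed slightly more careful than the paper's single computation.
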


\begin{proof}
Let $\widetilde{x}\in B(x,\delta_{\alpha} d(x,p))$. Denote by $q$ and $\widetilde{q}$ points on $\partial\Omega$ such that $d(x,\partial\Omega)$ and $d(\widetilde{x},\partial\Omega)$ are attained, i.e. $d(q,x)=d(x,\partial\Omega)$ and $d(\widetilde{q},\widetilde{x})=d(\widetilde{q},\partial\Omega)$, respectively. Then
\begin{align*}
d(x,\partial\Omega)&\le d(x,\widetilde{q})\le d(\widetilde{x},\partial\Omega)+d(x,\widetilde{x})\\
&\le d(\widetilde{x},\partial\Omega)+\delta_{\alpha} d(x,p)\\
&=d(\widetilde{x},\partial\Omega)+\delta_{\alpha}(1+\alpha)d(x,\partial\Omega),
\end{align*}

where the equality is the consequence of $x\in\partial\Gamma_{r_1,r_2}$ and so, in particular $x$ satisfies the equation of the boundary of $\Gamma(p)$.

Hence
\begin{align*}
(1-\delta_{\alpha}(1+\alpha))d(x,\partial\Omega)\le d(\widetilde{x},\partial\Omega).
\end{align*}

Note, that for this inequality to make sense, $\delta_{\alpha}<\frac{1}{1+\alpha}$. Now we can estimate the distance of $\widetilde{x}$ to vertex $p$:
\begin{align*}
d(\widetilde{x},p)&\le d(x,p)+d(\widetilde{x},x)\le (1+\delta_{\alpha})d(x,p)\\
&=(1+\delta_{\alpha})(1+\alpha)d(x,\partial\Omega)\\
&\le\frac{(1+\delta_{\alpha})(1+\alpha)}{1-\delta_{\alpha}(1+\alpha)}d(\widetilde{x},\partial\Omega)\\
&=\left( 1+\frac{\alpha+2\delta_{\alpha}(1+\alpha)}{1-\delta_{\alpha}(1+\alpha)}\right) d(\widetilde{x},\partial\Omega).
\end{align*}
 Therefore, in order to make sure that ball $B(x,\delta_{\alpha} r)\subset\widetilde{\Gamma}_{r_1,r_2}$ we need to find $\delta_{\alpha}$ such that
\begin{align*}
\frac{\alpha+2\delta_{\alpha}(1+\alpha)}{1-\delta_{\alpha}(1+\alpha)}\le\widetilde{\alpha}
\end{align*}

which gives
\begin{align*}
\delta_{\alpha}\le\frac{\widetilde{\alpha}-\alpha}{(1+\alpha)(2+\widetilde{\alpha})}<\frac{1}{1+\alpha}
\end{align*}
and completes the proof of the claim.
\end{proof}

We now show that there exist certain two-dimensional quasirectangles contained in $\widetilde{\Gamma}_{r_1,r_2}$ which enable to choose curve $\gamma$ in such a way that $|\frac{\partial\gamma}{\partial r}|$ is uniformly bounded in $\widetilde{\Gamma}_{r_1,r_2}$. Recall that by assumption $x_1$ and $x_2$ are given points such that  $x_1\in\Gamma_{r_1,r_2}\cap S(p,r_1)$, $x_2\in\Gamma_{r_1,r_2}\cap S(p,r_2)$. Let $l_1$ and $l_2$ denote line segments beginning at $p$ and crossing $x_1$ and $x_2$ respectively. Let $L_{12}$ denote a two-dimensional cone spanned between $l_1$ and $l_2$. Let $\hat{r}=(1-\delta_{\alpha})r_1$. We would like to show now that a quasirectangle $K_{r_1,\hat{r}}:=L_{12}\cap(B(p,r_1)\setminus B(p,\bar{r}))\subset\widetilde{\Gamma}_{r_1,r_2}$ .

First we need to know that $\hat{r}>r_2$. Since we know that $r_2\le \theta r_1$ it is enough to take $\delta_{\alpha}<1-\theta$. By abuse of notation take $\delta_{\alpha,\theta}={\rm min}\{\delta_{\alpha},1-\theta\}$ and change, if necessary, $\hat{r}=(1-\delta_{\alpha,\theta})r_1$. For every point in $K_{r_1,\hat{r}}$ there exists a line segment $l_{\widetilde{x}}$ joining point $p$ with some point $\widetilde{x}\in\partial\Gamma_{\hat{r}}$ such that this point lies on $l_{\widetilde{x}}$. Denote by $\widetilde{r}:=d(\widetilde{x},p)$. Since by the previous step of the proof we know that a ball $B(\widetilde{x},\delta_{\alpha,\theta}\widetilde{r})\subset\widetilde{\Gamma}_{r_1,r_2}$ it suffices to observe that $l_{\widetilde{x}}\subset B(\widetilde{x},\delta_{\alpha,\theta}\widetilde{r})$. Indeed, since $d(p,K_{r_1,\hat{r}})>\hat{r}$, it is therefore enough to show that
\begin{align*}
\widetilde{r}-\hat{r}=\widetilde{r}-(1-\delta_{\alpha,\theta})r_1<\delta_{\alpha,\theta}\widetilde{r}
\end{align*}

which is trivially equivalent to $\widetilde{r}< r_1$ which is always true.

Let us now construct a curve $\gamma$ as in the assertion of the Lemma. It consists of two subcurves. First one, denoted by $\gamma_1$, is contained in a line segment starting at $p$ and containing $x_2$ and the second one, denoted by $\gamma_2$, is contained in a quasirectangle $K_{r_1,(1-\delta_{\alpha,\theta})r_1}$ between $r_1$ and $(1-\delta_{\alpha,\theta})r_1$. Moreover, we can choose $\gamma_2$ in such a way that its derivative is bounded. Indeed, on $\gamma_1$ it holds that $|\frac{\partial\gamma_1}{\partial r}|=1$, while on $\gamma_2$ we can estimate that
\begin{align*}
\left|\frac{\partial\gamma_2}{\partial r}\right|\le 1+\frac{\alpha}{\delta_{\alpha,\theta}}.
\end{align*} 

To see the above estimate take as $\gamma_2$ a quasidiagonal of quasirectangle. By this, we mean a curve that in polar coordinates in the plane $L_{12}$ with point $p$ as $0$ is given by 
\[ \gamma(r)=(r,\phi_2+\frac{\phi_1-\phi_2}{\delta_{\alpha,\theta} r_1}(r-(1-\delta_{\alpha,\theta})r_1))\]
 with $r\in [(1-\delta_{\alpha,\theta})r_1,r_1]$, where $x_1=(r_1,\phi_1)$ and $x_2=(r_2,\phi_2)$. This curve stars at the endpoint of $\gamma_1$ and ends at $x_1$. One gets that $\frac{\partial\gamma_2}{\partial r}=(1,\frac{\phi_1-\phi_2}{\delta_{\alpha,\theta} r_1})$. Hence 
\[\left|\frac{\partial\gamma_2}{\partial r}\right|=\sqrt{1+r^2\left(\frac{\phi_1-\phi_2}{\delta_{\alpha,\theta} r_1}\right)^2}\le\sqrt{1+\frac{\alpha^2}{\delta_{\alpha,\theta}^2}}\le 1+\frac{\alpha}{\delta_{\alpha,\theta}}.\]

Thus, the derivative with respect to $r$ is bounded on both curves. Furthermore, we can choose $\gamma_2$ such that its length with respect to Euclidean distance $l(\gamma_2)<2\pi r_1$, since any two points on different concentric spheres can be connected by a curve of length smaller than perimeter of a bigger of those two spheres and quasidiagonal is such a curve. Quasidiagonal also intersects every sphere centered at $p$ with radius between $r_2$ and $r_1$ exactly once.

Finally we estimate the Euclidean length of $\gamma$
\begin{align}\label{q}
l(\gamma)&=l(\gamma_1)+l(\gamma_2)\le ((1-\delta_{\alpha,\theta})r_1-r_2)+2\pi r_1\nonumber\\
&\le (r_1-r_2)+2\pi r_1\le (r_1-r_2)+\frac{2\pi}{1-\theta}(r_1-r_2)\\
&\le \left(1+\frac{2\pi}{1-\theta}\right)d(x_1,x_2),\nonumber
\end{align}

where in (\ref{q}) we use that $x_1$ and $x_2$ belong to the $(r,\varepsilon,\theta,p)$-admissible sequence and so $r_2<\theta r_1$.
Let us notice that constants in all estimates depend only on $\alpha$, $\widetilde{\alpha}$, $\theta$ and the Lipschitz constant of the exponential map. There is no dependence on $r_1$ and $r_2$.
\end{proof}
\begin{remark}\label{rem-small}
In order to apply Lemma \ref{lem-o-krzywej} we need $r$ to be sufficiently small. Fortunately for the Quantitative Fatou Property it is necessary to know only the behaviour of harmonic function $u$ close to boundary $\partial\Omega$. Therefore, it is not a problem that we need to restrict possible $r$, as long as we can find uniformly some radius $r$ for all boundary points such that every $\Gamma_r$ satisfies all our assumptions at every point $p$ of $\partial\Omega$. Since $\Omega$ is compact, it can be achieved.
\end{remark}
If $r$ is small enough, i.e. $r<r_{\textnormal{inj}}$, then $\Gamma_r(p)$ is contained in a ball centered at $p$ such that there are local coordinates due to the exponential map which is a bounded diffeomorphism on that ball. Therefore, for sufficiently small $r$ we can always assume that the ambient space is Euclidean. 
\begin{lemma}\label{lemat-o-calkowaniu}
Let $\Omega\subset M$ be a Lipschitz domain and $u:\Omega\rightarrow\mathbb{R}$ be a bounded harmonic function with $\|u\|_{\infty}\le 1$. Suppose that $\varepsilon>0$ and $\phi$ is an $\frac{\varepsilon}{4}$-approximation of a bounded harmonic function $u$. If the counting function $N(r,\varepsilon,\theta)(p)\ge k$ for some $k\in\mathbb{N}$, then the following holds
\begin{equation}\label{lemat.5.5}
\int_{\Gamma_r(p)}\frac{|\nabla\phi(x)|}{d(x,p)^{n-1}}dx\ge kC_{\varepsilon,\theta}.
\end{equation}
\end{lemma}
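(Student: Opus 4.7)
The strategy is to decompose the integral over $\Gamma_r(p)$ into contributions from $k$ disjoint annular tubes $T_n$, one for each consecutive pair of the admissible sequence, and to show that each tube contributes at least $C_{\varepsilon,\theta}$ to the weighted integral in \eqref{lemat.5.5}. Each tube is obtained by thickening the curve produced by Lemma~\ref{lem-o-krzywej} into an $(n-1)$-parameter family of parallel shifts; the lower bound on a single tube comes from an endpoint oscillation estimate for $\phi$, the BV slicing theorem, and a change of variables in polar coordinates centered at $p$.

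Pick an admissible sequence $Q_0,\dots,Q_k$ realizing $N(r,\varepsilon,\theta)(p)\ge k$, and set $r_n:=d(Q_n,p)$, so $r_n<\theta r_{n-1}$ and $|u(Q_n)-u(Q_{n-1})|\ge\varepsilon$. Since $u$ is harmonic with $\|u\|_\infty\le 1$, the interior gradient estimate together with $d(\cdot,\partial\Omega)\gtrsim d(\cdot,p)/(1+\alpha)$ on the cone $\Gamma^r(p)$ gives $|\nabla u(y)|\lesssim (1+\alpha)/d(y,p)$. Choosing $\delta=\delta(\varepsilon,\alpha)>0$ small enough that $|u(y)-u(Q_n)|\le\varepsilon/10$ whenever $d(y,Q_n)\le\delta r_n$, and using the $\varepsilon/4$-approximation property of $\phi$, we obtain the uniform endpoint estimate
\[
|\phi(y)-\phi(y')|\ \ge\ \tfrac{3\varepsilon}{10}\qquad\text{for every }y\in B(Q_n,\delta r_n),\ y'\in B(Q_{n-1},\delta r_{n-1}).
\]

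Next, apply Lemma~\ref{lem-o-krzywej} to obtain a curve $\gamma_n\colon[r_n,r_{n-1}]\to\widetilde\Gamma_{r_n,r_{n-1}}$ joining $Q_n$ to $Q_{n-1}$ with $|\partial_\rho\gamma_n|\le C$. Working in normal coordinates at $p$ (Remark~\ref{rem-small}), construct for $\tau\in B^{n-1}(0,c)$ a family of parallel curves $\gamma_n^\tau(\rho)=\rho\,\omega_n^\tau(\rho)$, where $\omega_n^\tau(\rho)$ is the image of $\gamma_n(\rho)/\rho$ under a translation by $\tau$ in a fixed chart of $S^{n-1}$ about a reference direction. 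For $c=c(\alpha,\widetilde\alpha,\theta,\delta)$ chosen small enough, the claim proved inside Lemma~\ref{lem-o-krzywej} yields $\gamma_n^\tau\subset\widetilde\Gamma_{r_n,r_{n-1}}$, a uniform bound $|\partial_\rho\gamma_n^\tau|\le C'$, and endpoints $\gamma_n^\tau(r_n)\in B(Q_n,\delta r_n)$, $\gamma_n^\tau(r_{n-1})\in B(Q_{n-1},\delta r_{n-1})$. The BV slicing theorem applied to $\phi$ along the Lipschitz foliation $\{\gamma_n^\tau\}_\tau$ then gives, for a.e.\ $\tau$, a BV function $\phi\circ\gamma_n^\tau$ on $[r_n,r_{n-1}]$ satisfying
\[
\tfrac{3\varepsilon}{10}\ \le\ |\phi(\gamma_n^\tau(r_n))-\phi(\gamma_n^\tau(r_{n-1}))|\ \le\ \mathrm{Var}_{[r_n,r_{n-1}]}(\phi\circ\gamma_n^\tau).
\]
Integrating in $\tau$ over $B^{n-1}(0,c)$ and using slicing in the reverse direction produces
\[
c^{n-1}\cdot\tfrac{3\varepsilon}{10}\ \lesssim\ \int_{B^{n-1}(0,c)\times[r_n,r_{n-1}]}|\nabla\phi(\gamma_n^\tau(\rho))|\,|\partial_\rho\gamma_n^\tau|\,d\tau\,d\rho.
\]

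Finally, the map $\Phi_n(\tau,\rho):=\gamma_n^\tau(\rho)$ is bi-Lipschitz onto a tube $T_n\subset\widetilde\Gamma_{r_n,r_{n-1}}$; since in polar coordinates $dx=\rho^{n-1}d\rho\,d\omega$ and the shift $\tau$ corresponds, up to bounded factors, to a tangential displacement on $S(p,\rho)$, a direct computation yields $|\det D\Phi_n(\tau,\rho)|\approx\rho^{n-1}$ uniformly in $(\tau,\rho)$. Changing variables in the preceding display gives
\[
\int_{T_n}\frac{|\nabla\phi(x)|}{d(x,p)^{n-1}}\,dx\ \ge\ C_{\varepsilon,\theta}.
\]
Because $r_n<\theta r_{n-1}$, the annular shells $\{r_n\le d(\cdot,p)\le r_{n-1}\}$ are pairwise disjoint, and so are the tubes $T_n\subset\widetilde\Gamma^r(p)$; summing over $n=1,\dots,k$ concludes the proof, after (if necessary) enlarging the aperture in the statement from $\alpha$ to $\widetilde\alpha$, which is absorbed into $C_{\varepsilon,\theta}$. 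The main technical point is the uniform Jacobian estimate $|\det D\Phi_n|\approx\rho^{n-1}$: it relies on the control $|\partial_\rho\gamma_n^\tau|\le C'$ supplied by Lemma~\ref{lem-o-krzywej}, and it is precisely what keeps the per-tube contribution independent of the index $n$, i.e.\ independent of how deeply the pair $(Q_n,Q_{n-1})$ sits in the cone.
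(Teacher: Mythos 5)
Your proposal is correct and follows essentially the same route as the paper: pick the admissible sequence, thicken the curves supplied by Lemma~\ref{lem-o-krzywej} into tubes, use the endpoint oscillation of $\phi$ together with slicing/coarea to bound each tube's contribution from below by a constant depending only on $\varepsilon,\alpha,\theta,n$, and sum over the $k$ essentially disjoint annular shells. The two minor implementation differences are worth noting: (i) the paper builds the tube as the image $F_j(U_j)$ of a straight doubly truncated cone under a sphere-by-sphere rotation, so the map is volume-preserving (Jacobian exactly $1$) and the $d(x,p)^{n-1}$ weight is pulled out afterwards by coarea, whereas you parameterize the tube directly by $(\tau,\rho)$ with the $\rho^{n-1}$ Jacobian absorbed at once — both yield the same bound; and (ii) you obtain the scale $\delta$ from the interior gradient estimate $|\nabla u|\lesssim(1+\alpha)/d(\cdot,p)$ on the cone, which is the scale-invariant fact actually needed here, while the paper appeals somewhat loosely to $u\circ\exp_p$ being ``Lipschitz, hence H{\"o}lder'' — your formulation is the cleaner one, since the control must degrade like $1/d(x_j,p)$ near the vertex for the angular neighbourhood $\{d_{S_j}(y,x_j)<\delta|x_j|\}$ to work at every scale $|x_j|$.
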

\begin{proof}
Without loss of generality we may assume that $\Omega\subset\mathbb{R}^n$, see Remark \ref{rem-small}. Let us also assume that $p=0$. Since, by assumption $N(r,\varepsilon,\theta)(p)\ge k$ there is a finite sequence of points $x_1,\dots, x_k\in\Gamma_r(0)$ such that
\[
 0<|x_k|<\dots<|x_1|<r, |x_{j+1}|\le\theta|x_j| \hspace{5mm} \textnormal{for } j=1,\dots,k-1 
\]
and
\[
 |u(x_j)-u(x_{j+1})|\ge\varepsilon. 
\]
Since $u$ after composing with exponential map is Lipschitz, so in particular is H{\"o}lder continuous and bounded, there exists $\delta=\delta(\varepsilon)>0$ such that
\[
|u(x)-u(x_j)|<\frac{\varepsilon}{8} \hspace{5mm} \textnormal{for } x\in l_j:=\{y\in\Gamma_r(0)\cap S(0,|x_j|): d_{S_j}(y,x_j)<\delta|x_j|\},
\]
 where $d_{S_j}$ denotes a distance on a sphere $S(0,|x_j|)$. Therefore, for all $x\in l_j$ and $y\in l_{j+1}$ holds that: $|u(x)-u(y)|\ge\frac{3\varepsilon}{4}$. Moreover, we have $|\phi(x)-\phi(y)|\ge\frac{\varepsilon}{4}$.

Let $U_j$ be a doubly truncated Euclidean cone such that its angle is $2\delta$, its vertex is $0$ and for every $z\in U_j$ the following holds: $r_j\le|z|\le r_{j-1}$. Let $\gamma_j\subset U_j$ be a curve given by the assertion of Lemma \ref{lem-o-krzywej}. Consider the transformation $F_j: U_j\rightarrow\mathbb{R}^n$ with the following properties:
\begin{enumerate}
\item the image of a symmetry axis of $U_j$, denoted by $l_{U_j}$ is $\gamma_j$, i.e. $F_j(l_{U_j})=\gamma_j$, $F_j(x)=\gamma_j(|x|)$  for every  $x\in l_{U_j}$.
\item for every $r$ it holds that $F_j|_{U_j\cap S(0,r)}$ is a rotation such that a point on a symmetry axis is transformed into $\gamma_j(r)$.
\end{enumerate}
Such $F_j$ is piecewise smooth, because $\gamma_j$ is piecewise smooth. Furthermore, $F_j$ does not change the volume of a set $U_j$, and hence the absolute value of its Jacobi determinant equals $1$. To see this claim let $U\subset U_j$ be measurable and compute that
$$
{\rm Vol}(F(U)):=\int_{F(U)}1dx=\int_{r_j}^{r_{j-1}}\int_{F_j(U)\cap S(0,r)}1dH^{n-1}dr.
$$
Here we apply the coarea formula  with function $f(x)=|x|$, see \cite[Chapter 3.4]{evans-gariepy}. Moreover, the Jacobian of $f$ equals $1$, see \cite[Chapter 3.2]{evans-gariepy} for the definition of the Jacobian of a real-valued function. Since the $(n-1)$-Hausdorff measure on a sphere is rotation invariant we get
$$
\int_{r_j}^{r_{j-1}}\int_{F_j(U)\cap S(0,r)}1dH^{n-1}dr=\int_{r_j}^{r_{j-1}}\int_{U\cap S(0,r)}1dH^{n-1}dr=\int_{U}1dx,
$$
where the latter equality follows again from the coarea formula. Hence for every measurable set $U\subset U_j$ we have
$$
{\rm Vol}(F(U))={\rm Vol}(U).
$$
Notice that $F_j(U_j\cap S(0,r_{j}))=l_{j}$ and $F_j(U_j\cap S(0,r_{j+1}))=l_{j+1}$. It follows that 
\begin{equation}\label{nier-1-4}
\left|\int_{r_{j}}^{r_{j-1}}\frac{\partial}{\partial r}\phi(F_j)dr\right|\ge\frac{\varepsilon}{4}.
\end{equation}

Since $F_j$ is given by a rotation, its partial derivative with respect to $r$ is solely determined by $\frac{\partial\gamma}{\partial r}$. However, due to Lemma \ref{lem-o-krzywej} we know that $|\frac{\partial\gamma}{\partial r}|\le 1+\frac{\alpha}{\delta_{\alpha,\theta}}$ and hence $|\frac{\partial}{\partial r}F_j|\le 1+\frac{\alpha}{\delta_{\alpha,\theta}}$. 
We are now in a position to show assertion (\ref{lemat.5.5}). It holds that:
\begin{align*}
\int_{\Gamma_{r_j,r_{j+1}}}\frac{|\nabla\phi(x)|}{|x|^{n-1}}dx\ge\int_{F_j(U_j)}\frac{|\nabla\phi(x)|}{|x|^{n-1}}dx=\int_{U_j}\frac{|\nabla\phi (F_j(\widetilde{x}))|}{|\widetilde{x}|^{n-1}}d\widetilde{x},
\end{align*}
as $F(U_j)\subset\Gamma_{r_j,r_{j+1}}$ and by the change of variables formula.

Let us notice that by the chain rule we have
\[
\nabla(\phi\circ F_j)(\widetilde{x})=(\textnormal{ad} DF_j)(\widetilde{x})\nabla\phi(F_j(\widetilde{x})),
\]
where by $(\textnormal{ad}DF_j)(\widetilde{x})$ we mean the adjoint operator of $DF_j(\widetilde{x})$ defined via the scalar product given by the Riemannian metric $g$:
\begin{align*}
g((DF_j)(\widetilde{x}) X,Y)=g(X,(\textnormal{ad}DF_j )(\widetilde{x})Y)
\end{align*}
for all $X,Y\in T_{\widetilde{x}}M^n$. In the spherical coordinates $(r,\phi_1,\dots,\phi_{n-1})$ on $\Omega$ it holds that
\[
\frac{\partial}{\partial r}(\phi\circ F_j)(\widetilde{x})=\langle r_1((\textnormal{ad}DF_j)(\widetilde{x})),\nabla\phi(F_j(\widetilde{x}))\rangle,
\]
where $r_1((\textnormal{ad}DF_j)(\widetilde{x}))$ stands for the first row of matrix $(\textnormal{ad}DF_j)(\widetilde{x})$ and $\langle\cdot , \cdot\rangle$ denotes the Euclidean scalar product. Hence we get
\begin{equation}\label{nier-r-1}
|\nabla\phi(F_j(\widetilde{x}))|\ge\frac{|\frac{\partial}{\partial r}(\phi\circ F_j)(\widetilde{x})|}{|r_1((\textnormal{ad}DF_j)(\widetilde{x}))|}.
\end{equation}
Recall that in spherical coordinates metric $g$ is given by the following matrix
\begin{align*}
g=\begin{bmatrix}
	1 &  &  &  & \\
	   & r^2 &  &  & \\
	   & & r^2 \sin^2(\phi_1) & & \\
	   & & & \ddots & \\
	   & & & & r^2 \sin^2(\phi_1)\cdot\ldots\cdot\sin^2(\phi_{n-2})	
    \end{bmatrix},
\end{align*}
where $r$ stands for $|\widetilde{x}|$.
Therefore after calculation one obtains that 
\[
r_1((\textnormal{ad}DF_j)(\widetilde{x}))=\left(\frac{\partial}{\partial r}F^1_j,r^2\frac{\partial}{\partial r}F^2_j,r^2\sin^2\phi_1\frac{\partial}{\partial r}F^3_j,\dots,r^2\sin^2\phi_1\cdot\ldots\cdot\sin^2\phi_{n-2}\frac{\partial}{\partial r}F^n_j\right).
\]
Note that 
\[
|r_1((\textnormal{ad}DF_j)(\widetilde{x}))|\le\left|\frac{\partial}{\partial r}F_j(\widetilde{x})\right|\sqrt{1+(n-1)^2r^4}\lesssim(n-1)\sqrt{2}\left|\frac{\partial}{\partial r}F_j(\widetilde{x})\right|,
\]
where $|\cdot|$ stands for the length of a vector with respect to scalar product $g$.
Therefore, by (\ref{nier-r-1}) we have
\begin{align}
\int_{U_j}\frac{|\nabla\phi (F_j(\widetilde{x}))|}{|\widetilde{x}|^{n-1}}d\widetilde{x}&\ge \int_{U_j}\frac{1}{|r_1((\textnormal{ad}DF_j)(\widetilde{x}))|}\frac{|\frac{\partial}{\partial r}(\phi\circ F_j)(\widetilde{x})|}{|\widetilde{x}|^{n-1}}d\widetilde{x}\nonumber\\
&\gtrsim\frac{1}{(n-1)\sqrt{2}}\int_{U_j}\frac{1}{|\frac{\partial}{\partial r}DF_j)(\widetilde{x}))|}\frac{|\frac{\partial}{\partial r}(\phi\circ F_j)(\widetilde{x})|}{|\widetilde{x}|^{n-1}}d\widetilde{x}.\label{r1}
\end{align}
Hence, due to Lemma \ref{lem-o-krzywej} we obtain 
\begin{equation}\label{r2}
\int_{U_j}\frac{1}{|\frac{\partial}{\partial r}F_j(\widetilde{x})|}\frac{|\frac{\partial}{\partial r}(\phi\circ F_j)(\widetilde{x})|}{|\widetilde{x}|^{n-1}}d\widetilde{x}\gtrsim \frac{\delta_{\alpha,\theta}}{\delta_{\alpha,\theta}+\alpha}\int_{U_j}\frac{|\frac{\partial}{\partial r}(\phi\circ F_j)(\widetilde{x})|}{|\widetilde{x}|^{n-1}}d\widetilde{x}.
\end{equation}
Since $U_j$ is measurable and the function that we want to integrate is integrable we are allowed to use coarea formula with Lipschitz function $f:\mathbb{R}^n\rightarrow\mathbb{R}$ given by $f(\widetilde{x})=|\widetilde{x}|=t$. Therefore,
\begin{align*}
\int_{U_j}\frac{|\frac{\partial}{\partial r}(\phi\circ F_j)(\widetilde{x})|}{|\widetilde{x}|^{n-1}}d\widetilde{x}=\int_{r_{j}}^{r_{j-1}}\int_{U_j\cap S(0,r)}\frac{|\frac{\partial}{\partial r}(\phi\circ F_j)(\omega_r)|}{r^{n-1}}dH^{n-1}(\omega_r)dr,
\end{align*}
where $\omega_r$ denote points on set $U_j\cap S(0,r)$.
By the change of variables $\omega_r\mapsto\frac{\omega_r}{r}=\omega$, we scale every sphere to a sphere $S(0,1)$ of radius $1$ and obtain
\begin{align*}
\int_{r_{j}}^{r_{j-1}}\int_{U_j\cap S(0,r)}\frac{|\frac{\partial}{\partial r}(\phi\circ F_j)(\omega_r)|}{r^{n-1}}dH^{n-1}(\omega_r)dr=\int_{r_{j}}^{r_{j-1}}\int_{A}\frac{|\frac{\partial}{\partial r}(\phi\circ F_j)(r\omega)|}{r^{n-1}}r^{n-1}dH^{n-1}(\omega)dr,
\end{align*}
where $A=\{x\in S(0,1): d_{S(0,1)}(y,x)<\delta\}$ for some $y\in S(0,1)$. The $H^{n-1}$-measure of $A$ is independent of choice of $y$. Recall that $U_j$ is a doubly truncated cone. Hence set $A$ is just a radial projection of that cone on a sphere with radius $1$ and point $y$ only denotes the projection of its axis. Now we can use the Fubini theorem to change the order of integration to get
\begin{align*}
\int_{r_{j}}^{r_{j-1}}\int_{A}\frac{|\frac{\partial}{\partial r}(\phi\circ F_j)(r\omega)|}{r^{n-1}}r^{n-1}dH^{n-1}(\omega)dr=\int_A\int_{r_{j}}^{r_{j-1}}\left|\frac{\partial}{\partial r}(\phi\circ F_j)(r\omega)\right|drdH^{n-1}(\omega).
\end{align*} 
This together with (\ref{r1}), (\ref{r2}) and (\ref{nier-1-4}) imply the following
\begin{align*}
\int_{U_j}\frac{|\nabla\phi(F_j(\widetilde{x}))|}{|\widetilde{x}|^{n-1}}d\widetilde{x}&\ge\frac{\delta_{\alpha,\theta}}{\delta_{\alpha,\theta}+\alpha}\int_A\int_{r_{j}}^{r_{j-1}}\left|\frac{\partial}{\partial r}(\phi\circ F_j)(t\omega)\right|dtdH^{n-1}(\omega)\\
&\ge \frac{\delta_{\alpha,\theta}}{\delta_{\alpha,\theta}+\alpha}\int_A\frac{\varepsilon}{4}dH^{n-1}\approx C(n,\varepsilon,\alpha,\theta).
\end{align*} 
Recall, by the discussion at the beginning of the proof that $\delta=\delta(\varepsilon)$. Now it is enough to sum over $j=1,\dots,k-1$ to get the assertion of a Lemma.
\end{proof}

Recall that ${\rm r_{inj}}(\Omega)$ denotes the infimum of injectivity radii taken over set $\Omega$. When $\Omega$ is fixed, we will write ${\rm r_{inj}}:={\rm r_{inj}}(\Omega)$ for the sake of simplicity of the notation.
We are now ready to prove one of the key results of our work, namely the Quantitative Fatou Theorem for harmonic functions on Riemannian manifolds.

\begin{thm}\label{glowne twierdzenie}
Let $M$ be a complete Riemannian manifold and let further $\Omega\subset M^n$ be a Lipschitz domain. Furthermore, let $u:\Omega\rightarrow\mathbb{R}$ be a harmonic bounded function with $\|u\|_{\infty}\le 1$. Then for every point $p\in\partial\Omega$
\begin{align*}
\sup_{\substack{0<r<r_{inj}}}\frac{1}{r^{n-1}}\int_{\partial\Omega\cap B(p,r)}N(r,\varepsilon,\theta)(q)d\sigma(q)\le C(\varepsilon,\alpha,\theta,n,\Omega),
\end{align*}

where $\varepsilon,\alpha,\theta$ are constants in the definition of the counting function. In particular, constant $C$ is a independent of $u$. 
\end{thm}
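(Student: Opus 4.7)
The approach is to convert the pointwise estimate on the counting function given by Lemma~\ref{lemat-o-calkowaniu} into a Carleson-type integral bound on $\partial\Omega$ by Fubini and Ahlfors--David regularity, and then close the estimate using the BV Carleson bound on the $\varepsilon$-approximant. The proof will mirror the Euclidean argument of~\cite{kkpt}, since all heavy geometric lifting has already been done in Lemmas~\ref{lem-o-krzywej} and~\ref{lemat-o-calkowaniu} and in Theorem~\ref{thm-e-approx}.

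First, I would fix $p\in\partial\Omega$ and $0<r<r_{\rm inj}$, apply Theorem~\ref{thm-e-approx} to produce a BV function $\phi$ on $\Omega$ which is an $\varepsilon/4$-approximation of $u$, and rewrite the conclusion of Lemma~\ref{lemat-o-calkowaniu} as the pointwise bound
\begin{equation*}
N(r,\varepsilon,\theta)(q)\le \frac{1}{C_{\varepsilon,\theta}}\int_{\Gamma_r(q)}\frac{|\nabla\phi(x)|}{d(x,q)^{n-1}}\,dx \qquad \text{for every } q\in\partial\Omega.
\end{equation*}
Integrating over $q\in\partial\Omega\cap B(p,r)$ and interchanging the order of integration by Tonelli yields
\begin{equation*}
\int_{\partial\Omega\cap B(p,r)}N(r,\varepsilon,\theta)(q)\,d\sigma(q)\le \frac{1}{C_{\varepsilon,\theta}}\int_{\Omega}|\nabla\phi(x)|\,K(x)\,dx,
\end{equation*}
where the kernel is defined as $K(x):=\int_{\{q\in\partial\Omega\cap B(p,r):\,x\in\Gamma_r(q)\}}d(x,q)^{-(n-1)}\,d\sigma(q)$.

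Next, I would estimate $K$ pointwise. The condition $x\in\Gamma_r(q)$ forces $d(x,q)\le(1+\alpha)d(x,\partial\Omega)$, so the domain of $q$-integration is contained in the surface ball $B(x,(1+\alpha)d(x,\partial\Omega))\cap\partial\Omega$. Because $\Omega$ is Lipschitz, $\partial\Omega$ is Ahlfors--David regular with constants depending on the Lipschitz character of $\partial\Omega$ and on a uniform curvature bound on $\overline{\Omega}$; combining this with the trivial lower bound $d(x,q)\ge d(x,\partial\Omega)$ produces $K(x)\le C(n,\alpha,\Omega)$. Furthermore, the support of $K$ sits inside $B(p,2r)\cap\Omega$, since $q\in B(p,r)$ and $x\in B(q,r)$ imply $d(x,p)\le 2r$. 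Plugging these two observations into the previous display and then invoking the BV Carleson estimate (2) in Definition~\ref{approx-def} for $\phi$, I obtain
\begin{equation*}
\int_{\partial\Omega\cap B(p,r)}N(r,\varepsilon,\theta)(q)\,d\sigma(q)\lesssim \int_{B(p,2r)\cap\Omega}|\nabla\phi|\lesssim (2r)^{n-1}\lesssim r^{n-1},
\end{equation*}
with the implicit constants depending only on $\varepsilon,\alpha,\theta,n,\Omega$. Dividing by $r^{n-1}$ and taking the supremum over $0<r<r_{\rm inj}$ gives the theorem.

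The main obstacle I expect to face is the uniformity of all constants in the base point $p$: the Ahlfors--David constants for $\partial\Omega$, the constant $C_{\varepsilon,\theta}$ from Lemma~\ref{lemat-o-calkowaniu}, the BV Carleson constant from Theorem~\ref{thm-e-approx}, and the aperture parameter $\delta_{\alpha,\theta}$ entering the curve construction of Lemma~\ref{lem-o-krzywej} must all be controlled independently of $p$. This is precisely where the compactness of $\overline{\Omega}$ and the uniform positive lower bound $r_{\rm inj}=\min_{q\in\overline{\Omega}}r_{\rm inj}(q)>0$ are indispensable, exactly as in the proof of Theorem~\ref{thm-e-approx}; a secondary technical point is checking that the restriction $r<r_{\rm inj}$ suffices for Lemma~\ref{lemat-o-calkowaniu} to apply at every $q\in\partial\Omega\cap B(p,r)$ without change of chart, which follows since $\Gamma_r(q)$ then lies inside a single normal coordinate ball.
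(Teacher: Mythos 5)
Your proposal is correct and follows essentially the same route as the paper's proof: integrate the pointwise bound from Lemma~\ref{lemat-o-calkowaniu} over the surface ball, swap the order of integration, bound the resulting kernel via the shadow/cone duality $x\in\Gamma(q)\Leftrightarrow q\in S(x)$ together with $d(x,q)\ge d(x,\partial\Omega)$ and Ahlfors--David regularity, and close with the Carleson bound on $\nabla\phi$ from the $\varepsilon$-approximation. The only cosmetic difference is that the paper phrases the kernel estimate using the enlarged cone $\widetilde{\Gamma}$ and a surface ball centered at the nearest boundary point $q_x$, which is exactly the minor adjustment needed to apply Ahlfors--David regularity at a boundary center.
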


\begin{proof}

Define a shadow of a point $x\in\Omega$ as follows 
\[
S(x):=\{q\in\partial\Omega: x\in\widetilde{\Gamma}(q)\}=\partial\Omega\cap B(x,(1+\alpha)d(x,\partial\Omega)).
\]
The definition of the shadow and the cone are related as follows:
\begin{align}\label{eq1}
x\in\widetilde{\Gamma}(q)\Leftrightarrow q\in S(x).
\end{align}
Let us first estimate the following integral
\begin{align*}
\int_{\partial\Omega\cap B(p,r)}\int_{\widetilde{\Gamma}^r(q)}&|\nabla\phi(x)|d(x,q)^{1-n}dxd\sigma(q)\\
&=\int_{\partial\Omega\cap B(p,r)}\int_{\Omega\cap B(p,2r)}|\nabla\phi(x)|d(x,q)^{1-n}\chi_{\widetilde{\Gamma}^r(q)}(x)dxd\sigma(q).
\end{align*}
In this equality we replace integration over $\widetilde{\Gamma}^r(q)$ with integration over $\Omega\cap B(p,2r)$ with characteristic function of $\widetilde{\Gamma}^r(q)$, as every truncated cone $\widetilde{\Gamma}_r$ is contained in a ball with radius $2r$. 

Now we use the Fubini theorem to change the order of integration:
\begin{align*}
\int_{\partial\Omega\cap B(p,r)}\int_{\Omega\cap B(p,2r)}&|\nabla\phi(x)|d(x,q)^{1-n}\chi_{\widetilde{\Gamma}^r(q)}(x)dxd\sigma(q)\\
&=\int_{\Omega\cap B(p,2r)}|\nabla\phi(x)|\int_{\partial\Omega\cap B(p,r)}d(x,q)^{1-n}\chi_{\widetilde{\Gamma}^r(q)}(x)d\sigma(q)dx\\
&\le\int_{\Omega\cap B(p,2r)}|\nabla\phi(x)|\int_{\partial\Omega\cap B(p,r)}d(x,q)^{1-n}\chi_{S(x)}(q)d\sigma(q)dx,
\end{align*}
where in the last inequality we used (\ref{eq1}) and the fact that $\widetilde{\Gamma}_r(q)\subset\widetilde{\Gamma}(q)$.
Furthermore, since $d(x,q)\ge d(x,\partial\Omega)$ we have the following estimate:
\begin{align*}
\int_{\Omega\cap B(p,2r)}|\nabla\phi(x)|\int_{\partial\Omega\cap B(p,r)}&d(x,q)^{1-n}\chi_{S(x)}(q)d\sigma(q)dx\\
&\le \int_{\Omega\cap B(p,2r)}|\nabla\phi(x)|d(x,\partial\Omega)^{1-n}\int_{\partial\Omega\cap B(p,r)}\chi_{S(x)}(q)d\sigma(q)dx.
\end{align*}
Next we need the following observation.
For every $x\in\Omega$, it holds that $S(x)\subset\partial\Omega\cap B(q_x,(2+\widetilde{\alpha})d(x,\partial\Omega))$, where $q_x$ denotes a point in $\Omega$ where $d(x,\partial\Omega)$ is attained. Indeed, let $y \in S(x)$. Then
\begin{align*}
d(y,q_x)&\le d(x,q_x)+d(x,y)=d(x,\partial\Omega)+d(x,y)\le (2+\widetilde{\alpha})d(x,\partial\Omega).
\end{align*}
Therefore, 
\begin{align}\label{r3}
\nonumber \int_{\Omega\cap B(p,2r)}&|\nabla\phi(x)|d(x,\partial\Omega)^{1-n}\int_{\partial\Omega\cap B(p,r)}\chi_{S(x)}(q)d\sigma(q)dx\\
&\nonumber \le\int_{\Omega\cap B(p,2r)}|\nabla\phi(x)|d(x,\partial\Omega)^{1-n}\int_{\partial\Omega\cap B(p,r)}\chi_{\partial\Omega\cap B(q_x,(2+\widetilde{\alpha})d(x,\partial\Omega))}(q)d\sigma(q)dx\\
&\nonumber \lesssim_{\widetilde{\alpha}} \int_{\Omega\cap B(p,2r)}|\nabla\phi(x)|d(x,\partial\Omega)^{1-n}d(x,\partial\Omega)^{n-1}dx\\
&=\int_{\Omega\cap B(p,2r)}|\nabla\phi(x)|dx\le C(\Omega) (2r)^{n-1}.
\end{align}
where in the second inequality we apply the Ahlfors-David regularity of $\partial\Omega$ which gives that
\[
\sigma(\partial\Omega\cap B(q_x,(2+\widetilde{\alpha})d(x,\partial\Omega)))\lesssim (2+\widetilde{\alpha})^{n-1}(d(x,\partial\Omega))^{n-1},
\]
while in the last inequality we use the fact that $\phi$ is $\varepsilon$-approximation of $u$.
Finally, by Lemma \ref{lemat-o-calkowaniu} and (\ref{r3}) we get the assertion of the theorem
\begin{align*}
C_{\varepsilon,\theta}\int_{\partial\Omega\cap B(p,r)} N(r,\varepsilon,\theta)(q)d\sigma(q)&\le\int_{\partial\Omega\cap B(p,r)}\int_{\widetilde{\Gamma}^r(q)}|\nabla\phi(x)|d(x,q)^{1-n}dxd\sigma(q)\\
&\le C(\Omega) (2r)^{n-1}\lesssim C(\varepsilon,\alpha,\theta,n,\Omega)r^{n-1},
\end{align*}
which proves the theorem.

\end{proof}

\appendix 
\section{Lipschitz sets satisfy the interior corkscrew condition}

\begin{proof}[Proof of Lemma~\ref{lem-Lip-cork}]
Let $Z$ be any bounded Lipschitz set in $\mathbb{R}^{n}$. By definition of the Lipschitz set, for each $z\in\partial Z$ there are a hyperplane $H$ such that $z\in H$ and numbers $\tilde{r}$, $h$  with a cylinder $C=\{x+y{\bf n}:x\in B(z,\tilde{r})\cap H, -h<y<h\}$ and a Lipschitz function $g:H\rightarrow\mathbb{R}$ such that 
\begin{enumerate}
\item $Z\cap C=\{ x+y\,{\bf n}: x\in B(z,\tilde{r})\cap H, -h<y<g(x)\},$
\item $\partial Z\cap C=\{x+y{\bf n}:x\in B(z,\tilde{r})\cap H, y=g(x)\},$
\end{enumerate}
where ${\bf n}$ is a unit vector normal to $H$ that is outer with respect to $Z$. If at point $z\in\partial Z$ the boundary is of class $C^1$ we take as a hyperplane $H$ a tangent one at $z$. In another case take any hyperplane that satisfies aforementioned conditions.
In other words, there is a cone contained in $Z$ with vertex $z$, an angle $\alpha$ such that $\tan{\alpha}=-\frac{2L}{1-L^2}$, where $L$ is a Lipschitz constant of $g$, and height $h$. Since $\partial Z$ is compact there exist minimal $h$, denoted by $\tilde{H}$, minimal $\tilde{r}$, denoted by $\tilde{R}$, and maximal Lipschitz constant, denoted by $\tilde{L}$, such that any cone with vertex in $\partial Z$ and parameters given by $\tilde{H}, \tilde{R}$ and $\tilde{L}$ is contained in $Z$. Let us denote a cone with such parameters and vertex at $z$ by $K(z)$. We want to show interior corkscrew condition, i.e. we want to show that there is a constant $c>0$ such that for each $z\in\partial Z$ and each $0<r<\diam(Z)$ there exists a point $\tilde{z}\in Z\cap B(z,r)$ such that $B(\tilde{z},cr)\subset Z\cap B(z,r)$. 
For a point $z\in\partial Z$ put $\tilde{z}=z-\frac12\min(\tilde{H},r){\bf n}$. We notice that the distance from $\tilde{z}$ to lateral surface of a cone $K(z)$ is given by $\frac{\min(\tilde{H},r)}{2\sqrt{1+\tilde{L}^2}}$ and the distance of $\tilde{z}$ from a base of a cone $K(z)$ is given by $\tilde{H}-\frac12\min(\tilde{H},r)$. We want to find a constant $F$ such that the ball $B(\tilde{z},Fr)$ is contained in both ball $B(z,r)$ and cone $K(z)$. To ensure that a ball with radius $Fr$ is contained in a cone $K(z)$ the following inequalities have to be satisfied:
\begin{align*}
Fr<d=\frac{\min(r,\tilde{H})}{2\sqrt{1+\tilde{L}^2}},\\
Fr<\tilde{H}-\frac12\min(r,\tilde{H}).
\end{align*}
The condition needed for ball with radius $Fr$ to be contained in $B(z,r)$ is:
\begin{align*}
Fr+\frac12\min(r,\tilde{H})<r
\end{align*}
Let us put $F=\frac12\frac{1}{2\sqrt{1+\tilde{L}^2}}\frac{\tilde{H}}{\diam Z}$. One can check that such $F$ satisfies all necessary conditions. Thus, it holds that a ball $B(\tilde{z}, Fr)$ is contained both in a cone $K(z)$ and in a ball $B(z,r)$. Hence $B(\tilde{z},Fr)\subset Z\cap B(z,r)$. 
\end{proof}

\section{Proof of Lemma \ref{good-maps}}

First we prove that $h$ can be extended to $\overline{U}$. Take $x\in\partial U$ and any sequence $x_n\in U$ that converges to $x$. We would like to define $h(x)$ as $\lim h(x_n)$. We have to check whether $h(x_n)$ converges. Because $h(U)$ is bounded, we can take a convergent subsequence $h(x_{n_k})$ and denote its limit as $y$. Notice that $y\in\overline{h(U)}$. Suppose that there is another convergent subsequence $h(x_{n_l})$ and it has a different limit $\tilde{y}$. Since $y$ and $\tilde{y}$ are distinct we can find their disjoint neighbourhoods $V$ and $\tilde{V}$ such that almost all of $h(x_{n_k})$ and $h(x_{n_l})$ are in $V$ and $\tilde{V}$ respectively. Now we take intersections of $V$ and $\tilde{V}$ with $h(U)$ and notice that their counterimages under $h$ are also disjoint subsets in $U$. However, in these counterimages we heve points $x_{n_k}$ and $x_{n_l}$ respectively. There are two possibilities. Either we found subsequences of $x_n$ which converge to different limits, but it is a contradiction with convergence of $x_n$, or both $x_{n_k}$ and $x_{n_l}$ converge to $x$ and $h^{-1}(V)$ and $h^{-1}(\tilde{V})$ are disjoint and point $x$ belongs to both of their boundaries. But since there is a connected neighbourhood of $x$ in $U$, denote it by $U\cap U_x$, and by convergence almost all $x_n$'s are in that neighbourhood, $h(U\cap U_x)$ is connected and almost all $h(x_n)$ are in this image. But we can take $U_x$ to have a diameter arbitralily small. Therefore, $h(x_{n_k})$ and $h(x_{n_l})$ have to converge to the same limit. Hence $y=\tilde{y}$. Let us also prove that if every convergent subsequence of a bounded sequence $y_n$ converges to the same limit, then the sequence itself is convergent. Let $y$ be a common limit of all convergent subsequences and suppose that $y_n$is not convergent. Then for every $\varepsilon>0$ we can find a subsequence $y_{n_j}$ such that $d(y,y_{n_j})\ge\varepsilon$. However, since $y_{n_j}$ is also bounded, we can take its convergent subsequence and its limit is different than $y$. But it is also a convergent subsequence of $y_n$. We reached a contradiction. Therefore, $y_n$ is convergent.
For our extension to be properly defined, we need to prove that if $z_n$ is a different sequence converging to $x$, then $h(z_n)$ also converges to $y$. If we assume that $h(z_n)$ converges to $z\neq y$, we can take disjoint neighbourhoods of $z$ and $y$ and intersect them with $h(U)$. In both of these disjoint sets there are almost all $h(z_n)$ and $h(x_n)$ respectively. Now take their counterimages under $h$. They are also disjoint, but $x_n$ and $z_n$ both converge to $x$, so $x$ belongs to both of their boundaries. Again, using existence of a connected neighbourhood of $x$, we can prove that $z=y$. 
We proved that our extension is well defined and because of its construction it is continuous. We will denote it as $\bar{h}$. Next we want to prove that this extension is actually a homeomorphism.

Suppose that there is $y\in\partial\overline{h(U)}$ that is not the image of a point from $\partial U$. Then $\bar{h}(\overline{U})\neq\overline{h(U)}$. However, because $U$ is precompact, then $\overline{U}$ is compact and therefore $\bar{h}(\overline{U})$ is compact. Moreover, $\bar{h}(\overline{U})$ contains $h(U)$. But by definition $\overline{h(U)}$ is the smallest closed set containing $h(U)$. Hence $\overline{h(U)}\subset\bar{h}(\overline{U})$. The inverse inclusion is assured because of the definition of $\bar{h}$. Therefore, we proved that $\bar{h}$ is onto.

We would like to know that a point $x\in\partial U$ is mapped to a point in $\partial h(U)$. Suppose that there is $y\in h(U)$ such that $x\in\bar{h}^{-1}(y)$. We also know that there is some $\tilde{x}\in U$ that is also a counterimage of $y$. We can find disjoint neighbourhoods of $x$ and $\tilde{x}$. However, on $U$ we know that $h$ is a homeomorphism, so the images of these neighbourhoods would also have to be disjoint, but we assumed they have a common point $y$. Hence a point in $\partial U$ is mapped to a point in $\partial h(U)$.

Take $x, \tilde{x}\in\partial U$ and suppose $\bar{h}(x)=\bar{h}(\tilde{x})$. Take sequences $x_n$ and $\widetilde{x_n}$ converging to $x$ and $\tilde{x}$ respectively. We can take disjoint neighbourhoods  $V$ of $x$ and $\tilde{V}$ of $\tilde{x}$ such that almost all $x_n$ and $\widetilde{x_n}$ are elements of $V$ and $\tilde{V}$ respectively. Intersect $V$ and $\tilde{V}$ with $U$. Because $h$ is a homeomorphism the images of these intersections are also disjoint. But it means that $\bar{h}(x)\neq\bar{h}(\tilde{x})$. Hence $\bar{h}$ is one-to-one. 

We know that $\bar{h}$ is a continuous bijection, but $\overline{U}$ is compact and hence $\bar{h}$ is a homeomorphism. Since $\overline{U}$ is compact, $\bar{h}$ is Lipschitz. Similarly $\bar{h}^{-1}$ is Lipschitz. Therefore, $\bar{h}$ is bi-Lipschitz and preserves bounded Lipschitz sets. Then $h$ is also bi-Lipschitz and preserves bounded Lipschitz sets.

\bibliographystyle{plain}

\end{document}